\newtheorem{theorem}{Theorem}[section]
\newtheorem{proposition}[theorem]{Proposition}
\newtheorem{corollary}[theorem]{Corollary}
\newtheorem{lemma}[theorem]{Lemma}
\theoremstyle{definition}
\newtheorem*{definition*}{Definition}
\theoremstyle{remark}
\newtheorem{remark}[theorem]{Remark}
\numberwithin{equation}{section}
\newcommand{\al}{\alpha}
\newcommand{\ep}{\varepsilon}
\newcommand{\la}{\lambda}
\newcommand{\om}{\omega}
\newcommand{\si}{\sigma}
\newcommand{\te}{\theta}
\newcommand{\vp}{\varphi}
\newcommand{\Si}{\Sigma}
\def\RR{\mathbb{R}}
\def\ZZ{\mathbb{Z}}
\def\TT{\mathbb{T}}
\newcommand{\cA}{{\mathcal A}}
\newcommand{\cB}{{\mathcal B}}
\newcommand{\cC}{{\mathcal C}}
\newcommand{\cD}{{\mathcal D}}
\newcommand{\cE}{{\mathcal E}}
\newcommand{\cI}{{\mathcal I}}
\newcommand{\cM}{{\mathcal M}}
\newcommand{\tcM}{\widetilde{{\mathcal M}}}
\newcommand{\cO}{{\mathcal O}}
\newcommand{\cR}{{\mathcal R}}
\newcommand\cZ{\mathcal Z}
\newcommand{\pd}{\partial}
\newcommand\minus\backslash
\newcommand\lan\langle
\newcommand\ran\rangle
\newcommand{\tr}{\operatorname{tr}}
\newcommand{\un}{^{\mathrm u}}
\newcommand{\s}{^{\mathrm s}}
\DeclareMathOperator\Div{div} 
\DeclareMathOperator\Real{Re}
\renewcommand\leq\leqslant
\renewcommand\geq\geqslant
\newlength{\intwidth}
\DeclareMathOperator\Imag{Im}
 \DeclareMathOperator\curl{curl}
\begin{document}

\title[Stationary
  phase methods and the splitting of separatrices]{Stationary
  phase methods\\ and the splitting of separatrices}

\author{Alberto Enciso}
\address{Instituto de Ciencias Matem\'aticas, Consejo Superior de
  Investigaciones Cient\'\i ficas, 28049 Madrid, Spain}
\email{aenciso@icmat.es}

\author{Alejandro Luque}
\address{Department of Mathematics, Uppsala University, 751 06
  Uppsala, Sweden}
\email{alejandro.luque@math.uu.se}

\author{Daniel Peralta-Salas}
\address{Instituto de Ciencias Matem\'aticas, Consejo Superior de
 Investigaciones Cient\'\i ficas, 28049 Madrid, Spain}
\email{dperalta@icmat.es}

%%    General info
%\subjclass[2010]{35B38, 58J05, 58K45}
%\date{\today}
%
%\keywords{ }
%
\begin{abstract}
  Using stationary phase methods, we provide an explicit formula for
  the Melnikov function of the one and a half degrees of freedom
  system given by a Hamiltonian system subject to a rapidly
  oscillating perturbation. Remarkably, the Melnikov
  function turns out to be computable without an explicit knowledge
  of the separatrix and in the case of non-analytic systems.  This is related to a priori stable systems
  coupled with low regularity perturbations. It also applies to
  perturbations controlled by wave-type equations, so in particular we also
  illustrate this result with the motion of charged particles
  in a rapidly oscillating electromagnetic field. Quasi-periodic
  perturbations are discussed too.
\end{abstract}
\maketitle

%%%%%%%%%%%%%%%%%%%%%%%%%%%%%%%%%%%%%%%%
\section{Introduction}
%%%%%%%%%%%%%%%%%%%%%%%%%%%%%%%%%%%%%%%%

One of the most fundamental problems in celestial and Hamiltonian
mechanics is to ascertain whether a given system is chaotic. Even
though this question has been thoroughly studied for over a century,
the only existing method to address this question, which dates back to
Poincar\'e, is to consider perturbations of an explicit separatrix
of the system (more precisely, a homoclinic or
heteroclinic connection associated with a hyperbolic equilibrium) to produce transverse
intersections of a stable manifold and an unstable manifold. This is
well-known to imply the existence of a chaotic invariant set with
positive topological entropy~\cite{Smale}. Under suitable
technical hypotheses, the converse implication also holds in low dimension, so the existence of transverse homoclinic connections and positive
entropy are in fact equivalent~\cite{Katok}.

Again since Poincar\'e, the way to analyze the intersections of stable
and unstable manifolds is through the computation of the so called
{\em displacement function}\/, which measures the distance between these
manifolds. The definition of the displacement function will be
recalled in Section~\ref{S.proof}. For our present purposes it
suffices to keep in mind that when the perturbation can be thought of in terms of a
perturbation parameter, the leading order term of the displacement
function is usually called the {\em Melnikov function}\/, which is
given by an explicit integral that we will write down shortly. A serious
difficulty in mechanics is that there are systems, called {\em a~priori
stable}\/, whose displacement function is exponentially small in the
perturbation parameter, so one cannot define a nontrivial
Melnikov function and the problem is not amenable to a perturbative
analysis. It is well known that the appearance of a priori stable
systems is associated with hyperbolic equilibria whose eigenvalues
are small in the perturbation parameter or, equivalently modulo a
rescaling of the time variable, to perturbations that oscillate
rapidly in time.

To introduce the explicit expression of the Melnikov integral, let
us for concreteness consider the classical perturbative setting of a
one and a half degrees of freedom system of the form
\begin{equation}\label{eqx}
\ddot x= f(x)+\ep^r g(x,\dot x,t; \ep)\,,
\end{equation}
where $r>0$ and $x$ takes values in the real line. This is the kind of systems that one typically
gets in the study of perturbations of Hamiltonian systems with one degree of freedom
and in many reductions of a three-dimensional system. We
{can}
assume
that the unperturbed system ($\ep=0$) has a homoclinic trajectory
given by
a
stable manifold and
an unstable manifold of a hyperbolic
equilibrium, which one can take to be the origin
$(x,\dot x)=(0,0)$.
The homoclinic trajectory itself will be denoted by $x_0(t)$, and
the Melnikov integral $\cM(t_0)$ is then the one-variable function defined
as~\cite[Section 4.5]{GH}
\begin{equation}\label{Melnikov}
\cM(t_0):=\ep^r\int_{-\infty}^\infty \dot x_0(t)\, g(x_0(t),\dot
x_0(t),t+t_0;\ep)\, dt\,.
\end{equation}
When the function~$g$ and all its derivatives are locally bounded
uniformly in~$\ep$, it is standard that the Melnikov integral captures
the leading order of the displacement function~$\cD(t_0)$ in the sense
that $\cM(t_0)$ is of order~$\ep^r$ and
\[
  \cD(t_0)=\cM(t_0) + O(\ep^{r+1}) \,.
\]

When the perturbation~$g(x,\dot x,t; \ep)$ and all its derivatives are
well behaved at $\ep=0$, the evaluation of the Melnikov integral does
not present any conceptual difficulties (although, of course, explicit
formulas are often impossible to obtain).  However, when the
perturbation oscillates wildly as $\ep\to0$, the Melnikov function,
whose connection with the displacement is also subtler, can become
exponentially small and the analysis becomes much more
involved~\cite{Viejo2, Viejo3, Viejo5, Viejo6, Viejo9, Guardia1, Guardia3}. This is not surprising as this
highly oscillatory situation corresponds to the case of a priori
stable systems. In order to see this, consider the model a priori
stable problem of a small amplitude pendulum with a time-periodic
perturbation,
\[
\frac{d^2 x}{d\tau^2}= \ep^2\sin x +\ep^{r+2} G\bigg(x,\frac{dx}{d\tau}\bigg)\, \cos \tau\,,
\]
where $G(x,v)$
is a smooth function.
After rescaling the time variable as $t:=\ep\tau$, the system can be
written in the form~\eqref{eqx} with
\begin{equation}\label{stab}
f(x)=\sin x\,,\qquad g (x,\dot x,t; \ep):= G(x,\ep\dot x)\, \cos\frac t\ep\,,
\end{equation}
so in this particular example the $C^j$~norm of~$g$ is
\[
\|\ep^r g (\cdot,\cdot ,\cdot; \ep)\|_{C^j}=O(\ep^{r-j})
\]
for $\ep$ close to~0. More precisely, for all~$R$ one has
\begin{equation}\label{fastt}
C_1{\ep^{r-j+l}}  \leq\sup_{t\in\RR, \; |x|+ |\dot x|<R} \ep^r \big|\pd_t^j\pd_x^k\pd_{\dot x}^lg(x,\dot x,t;
\ep)\big|\leq C_2 \ep^{r-j+l}
\end{equation}
with a positive constant $C_j$ that depends on~$R$ but not on~$\ep$. For
this kind of perturbations, it is standard (see e.g ~\cite{Lombardi})
that the Melnikov function is exponentially small in~$\ep$.

The present paper is centered around the observation that, however, it
is possible to analyze the splitting of separatrices in an a priori
stable system in the perturbative regime provided that the
perturbation features fast oscillations in the space variables. This
is achieved by writing the Melnikov function as an oscillatory
integral that can be evaluated using a robust method based on
stationary phase arguments. Before explaining this mechanism in
detail, let us discuss briefly the meaning of these
perturbations. Basically,
{they}
correspond again to perturbations that
are small in a low norm (say, the uniform norm) but large in higher
norms, but with the additional twist that the norm gets larger not
only through time derivatives as in~\eqref{fastt} but also through
space derivatives. Therefore, these systems model small perturbations
of low regularity, in a quantitative way. These perturbations appear
naturally in systems controlled through a PDE (prime examples would be
non-smooth geometries generated via Nash--Moser
iterations~\cite{Nash,Lellis} or problems in fluid
mechanics~\cite{NS}), in systems with random Gaussian
perturbations~\cite{Sodin}, or in equations on manifolds defined
through diffeomorphisms that are close to the identity in the uniform
norm but not in higher norms. The latter arise e.g.\ in Eliashberg and
Mishachev's approach to Gromov's h-principle~\cite{Eliashberg}, which
is central to many questions in contact geometry, and even in the
study of magnetic fields generated by knotted wires~\cite{Ulam}. For
the benefit of the reader, a concrete physical example of a charged
particle moving in a rapidly oscillating electromagnetic field is
studied in detail in Section~\ref{S.ex}.

To explain how this kind of perturbation can split the separatrices of
an a priori stable system in the perturbative regime, for concreteness
let us consider perturbations of the form
\begin{equation}\label{g}
g (x,\dot x,t; \ep) = q\bigg(\frac x\ep,\dot x;\ep\bigg)\cos\frac t\ep \,,
\end{equation}
where the function $q(\xi,v;\ep)$ is a $C^1$~function and its derivatives are
locally bounded by $\ep$-independent
constants.
To keep things simple, we will assume that the function
$q(\xi,v;\ep)$ is $2\pi$-periodic in~$\xi$.
These perturbations, of course, do not
satisfy the bound~\eqref{fastt}, as otherwise the Melnikov function
would be exponentially small. It is worth mentioning that a
perturbation that does not satisfy that bound either was studied in~\cite{Guardia2}, where the authors took the analytic
perturbation of the form
\[
g(x,t;\ep):=\ep^r \frac{\sin x}{(1+(1-\ep^2)\sin x)^2} \cos \frac{t}\ep\,,
\]
which satisfies the uniform bounds
\[
C_1\ep^{r-2-4l}\leq \|g(\cdot,t;\ep)\|_{C^l} \leq C_2\ep^{r-2-4l}\,.
\]
Using residues, the authors manage to compute the leading order
contribution to the displacement function, which is not exponentially
small in~$\ep$. Using harmonic analysis one can see that a significant
part of the $L^2$~norm of the function is concentrated on frequencies
of order~$\ep^{-4}$, and a (straightforward but rather messy)
extension of the ideas discussed in this paper could be used to
analyze this kind of perturbation without any analyticity assumptions.

The main result of the paper is the following, which provides an
explicit formula for the leading order of the displacement function for
rapidly oscillating perturbations as above. We will state it in terms
of the Fourier coefficients~$q_k(v)$ of~$q(\xi,v;0)$, defined as
\[
q_k(v):=\frac1{2\pi}\int_0^{2\pi} q(\xi,v;0)\, e^{-ik\xi}\, d\xi\,.
\]
We will state the theorem
in the case of homoclinic connections, but obviously an analogous
result holds for heteroclinic connections:

\begin{theorem}\label{T.Melnikov}
Let us consider the system~\eqref{eqx} with $r>\frac52$ and a
perturbation of the form~\eqref{g}, where the functions~$f$ and~$g$
are of class~$C^3$. Suppose that the unperturbed system ($\ep=0$) has a hyperbolic equilibrium
  with a homoclinic connection corresponding to a stable manifold and
  an unstable manifold, which we describe through
  an integral curve $x_0(t)$. Setting
\[
\cC_k:=\{ t^*\in\RR: \dot x_0(t^*)=1/k\}\,,
\]
assume moreover that $|f(x_0(t^*))|\neq0$ for all $t^*\in \cC_k$ and
any nonzero integer~$k$.
Then the displacement function is
\[
\cD(t_0)=\ep^{r+\frac12}\Big(\cA\cos
\frac{t_0}\ep+\cB\sin\frac{t_0}\ep\Big) + O(\ep^{r+1}+\ep^{{2r-2}})\,,
\]
where $\cA$ and $\cB$ are the real constants
\begin{align*}
\cA&:= \sqrt{2\pi} \Real\left[\sum_{
k\in\ZZ\backslash\{0\}
} \sum_{t^*\in\cC_k} \frac{ q_k(k^{-1})\, e^{i(k x_0(t^*)-t^*+\sigma^* \pi/4)}}{k|k|^{1/2}|f(x_0(t^*))|^{1/2}}\right] \,,\\
\cB&:= \sqrt{2\pi} \Imag\left[\sum_{
k\in\ZZ\backslash\{0\}
} \sum_{t^*\in\cC_k} \frac{ q_k(k^{-1})\, e^{i(k x_0(t^*)-t^*+\sigma^* \pi/4)}}{k|k|^{1/2}|f(x_0(t^*))|^{1/2}}\right] \,,
\end{align*}
where $\si^*:= kf(x_0(t^*))/|kf(x_0(t^*))|$
is plus or minus
one, and the cardinality of~$\cC_k$ is uniformly bounded in~$k$.
\end{theorem}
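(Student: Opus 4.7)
The plan is to evaluate the Melnikov integral $\cM(t_0)$ in \eqref{Melnikov} by stationary phase, after first recasting it through the Fourier expansion in the rapidly varying space variable, and then to bound $\cD(t_0)-\cM(t_0)$ by iterating the variation-of-parameters representation of the perturbed stable and unstable manifolds. Writing $q(\xi,v;0)=\sum_{k\in\ZZ}q_k(v)e^{ik\xi}$ and $\cos(\tau/\ep)=\half(e^{i\tau/\ep}+e^{-i\tau/\ep})$, the Melnikov integral becomes
\[
\cM(t_0)=\frac{\ep^r}{2}\sum_{\sigma=\pm1}e^{i\sigma t_0/\ep}\sum_{k\in\ZZ}\int_{-\infty}^{\infty}\dot x_0(t)\,q_k(\dot x_0(t))\,e^{i\Phi_{k,\sigma}(t)/\ep}\,dt+O(\ep^{r+1}),
\]
where $\Phi_{k,\sigma}(t):=kx_0(t)+\sigma t$ and the error absorbs the $C^1$-difference between $q(\cdot,\cdot;\ep)$ and $q(\cdot,\cdot;0)$.

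For each $k\neq 0$ and each $\sigma$, the critical points of $\Phi_{k,\sigma}$ solve $\dot x_0(t^*)=-\sigma/k$, which after the relabelling $k\mapsto-k$ in the $\sigma=+1$ branch are exactly the points of $\cC_k$. The second derivative $\Phi_{k,\sigma}''(t^*)=k\ddot x_0(t^*)=kf(x_0(t^*))$ is nonzero by hypothesis, so the critical points are nondegenerate and well-separated, and the standard stationary phase lemma yields for each of them the leading contribution
\[
\sqrt{\frac{2\pi\ep}{|kf(x_0(t^*))|}}\,e^{i\sigma^*\pi/4}\,\dot x_0(t^*)\,q_k(\dot x_0(t^*))\,e^{i\Phi_{k,\sigma}(t^*)/\ep}+O(\ep^{3/2}),
\]
with $\sigma^*=\sign(kf(x_0(t^*)))$. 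Substituting $\dot x_0(t^*)=1/k$ produces the factor $q_k(k^{-1})/(k|k|^{1/2}|f(x_0(t^*))|^{1/2})$, and combining the $\sigma=\pm1$ branches via $q_{-k}=\overline{q_k}$ (as $g$ is real) gives precisely $\cA\cos(t_0/\ep)+\cB\sin(t_0/\ep)$ with the constants of the statement. The $k=0$ summand carries no stationary point and is $O(\ep^N)$ for every $N$ by repeated integration by parts, using the exponential decay of $\dot x_0(t)$ at the hyperbolic equilibrium.

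For the error analysis I would exploit that the $C^3$ hypothesis on $g$ forces $|q_k|,|q_k'|,|q_k''|\lesssim|k|^{-3}$; combined with the uniform cardinality bound on $\cC_k$ and the $|k|^{-3/2}$ decay of the leading stationary-phase coefficient, this makes the sum over $k$ absolutely convergent and controls the tail so that the total stationary phase remainder is $O(\ep^{r+1})$. Finally, to pass from $\cM$ to $\cD$ I would represent the stable and unstable manifolds as fixed points of the integral operators associated with the linearization along $x_0(t)$ and iterate once: the resulting second-order correction involves quadratic expressions in derivatives of $g$, and each spatial derivative of $q(x/\ep,\dot x;\ep)$ costs a factor of $\ep^{-1}$, so this correction is of size $\ep^{2r-2}$, which matches the second term in the stated remainder.

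The main obstacles are, first, making the stationary phase asymptotics effective uniformly in $k$ so that the individual mode expansions can be summed rather than merely bounded, and second, justifying that the second-order displacement correction is really of order $\ep^{2r-2}$ despite the lack of uniform control of the higher $C^j$ norms of $g$. This latter phenomenon is precisely the feature responsible for the a priori stable behaviour, and it is what makes the restriction $r>5/2$ necessary so that the leading Melnikov term $\ep^{r+1/2}$ dominates both error contributions.
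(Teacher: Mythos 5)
Your outline follows the same route as the paper (Fourier expansion in the fast spatial variable, stationary phase at the critical points of the phases $kx_0(t)\pm t$, decay of the Fourier coefficients $q_k$ to control the sum over $k$, and a separate $O(\ep^{2r-2})$ comparison between $\cD$ and $\cM$), but it leaves unproved exactly the point where the real work lies, and which you yourself flag as ``the main obstacle'': the uniformity in $k$ of the stationary phase expansion and of the structure of the critical set $\cC_k$. This is not a routine technicality. For every large $|k|$ the equation $\dot x_0(t^*)=1/k$ has solutions in the tails of the separatrix, where $t^*_{k,j}\sim \la^{-1}\log|k|$ and $f(x_0(t^*_{k,j}))=O(1/|k|)$; the hypothesis $|f(x_0(t^*))|\neq 0$ therefore provides no $k$-independent nondegeneracy constant, so the ``standard stationary phase lemma'' cannot be invoked with uniform remainders. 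Moreover, the uniform bound on the cardinality of $\cC_k$ is part of the conclusion of the theorem, not something you may assume; establishing it (together with the separation of the critical points and the two-regime lower bounds $|\ddot\vp_k(t^*)|\geq c|k|$ for $|t^*|\leq T_0$ versus $|\ddot\vp_k(t^*)|\geq c$ in the tails) requires the asymptotics $\dot x_0(t)=\la c_\pm e^{\mp\la t}(1+\cE_\pm(t))$ with several derivatives of control on $\cE_\pm$, which in the $C^3$ setting the paper obtains through a $C^3$ linearization of the one-dimensional dynamics on the local invariant manifold (Lemma~\ref{L.cCk}). The subsequent stationary phase computation must then be carried out in two regimes (cutoffs of width $\sim|k|^{-1/2}$ around critical points at bounded times, cutoffs of fixed width in the tails), with an explicit change of variables and Fresnel integrals, yielding remainders of size $CM\ep|k|^{-5/2}$ that are summable in $k$ (Lemma~\ref{L.mkj}); your proposal asserts the desired outcome of this analysis without supplying the mechanism.

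Two smaller points. The claim that the $k=0$ term is $O(\ep^N)$ for every $N$ overreaches: with only $C^3$ data you can integrate by parts a limited number of times, and in fact a single integration by parts giving $O(M\ep)$ is all that is needed. Your reduction of $\cD-\cM$ to a second-order correction of size $\ep^{2r-2}$ states the right bound, and your heuristic (each spatial derivative of $q(x/\ep,\cdot)$ costs $\ep^{-1}$) identifies the correct mechanism; the paper proves it by differentiating the cross products $X_f(z_0)\times(z_\ep^{\mathrm{u}}-z_0)$ and $X_f(z_0)\times(z_\ep^{\mathrm{s}}-z_0)$ and using the invariant-manifold bound $|z_\ep^{\mathrm{u,s}}-z_0|\leq C\ep^{r-1}$, a short computation you would still need to carry out. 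As it stands, the proposal is an accurate plan along the paper's route, but the central uniform estimates are named as difficulties rather than resolved, so there is a genuine gap.
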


This theorem presents three unusual features that should be carefully
noticed. Firstly, unlike essentially all the other results in the
literature, analyticity is not required: both the unperturbed system
and the perturbation can be of class~{$C^3$}. Secondly, to compute
this formula one does not need to know the separatrix explicitly,
which makes it quite versatile. Thirdly, to effectively apply this formula, one only needs to know that
  $\cA$ and~$\cB$ are not both zero. This condition is satisfied
  generically, and also computable, so in a concrete example one could
  even verify this condition by means of a computer assisted
  proof. The proof of this theorem is presented in Section~\ref{S.proof}.

Since the main application of the splitting of separatrices is to
establish that a certain dynamics is chaotic, it is worth stating the
following immediate corollary, which provides a computable generic
condition for the existence of positive topological entropy:

\begin{corollary}\label{C.splitting}
If the generic condition $\cA^2+\cB^2\neq0$ is satisfied, the perturbed system exhibits transverse
homoclinic intersections for all small enough~$\ep>0$. In particular,
it has positive topological entropy.
\end{corollary}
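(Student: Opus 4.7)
The plan is to deduce the corollary directly from the asymptotic formula for~$\cD(t_0)$ supplied by Theorem~\ref{T.Melnikov}, together with the Smale--Birkhoff homoclinic theorem. The argument splits into three conceptually separate pieces.

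First I would check that the principal term in the expansion of~$\cD(t_0)$ genuinely dominates the error. Writing
\[
\cA\cos\frac{t_0}\ep+\cB\sin\frac{t_0}\ep=\sqrt{\cA^2+\cB^2}\,\cos\!\bigg(\frac{t_0}\ep-\vp\bigg)
\]
for a suitable phase $\vp=\vp(\cA,\cB)$, the hypothesis $\cA^2+\cB^2\ne 0$ guarantees that the main contribution has amplitude of exact order $\ep^{r+1/2}$. Since $r>\frac52$, one has $r+1>r+\tfrac12$ and $2r-2>r+\tfrac12$, so both pieces of the remainder $O(\ep^{r+1}+\ep^{2r-2})$ are of strictly smaller order than the oscillatory term. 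In particular, on any fixed interval of $t_0$ of length~$2\pi\ep$ the main term attains both positive and negative values exceeding $\tfrac12\ep^{r+1/2}\sqrt{\cA^2+\cB^2}$, whereas the error is $o(\ep^{r+1/2})$.

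Next I would locate zeros of $\cD(t_0)$ and argue that each of them corresponds to a transverse homoclinic intersection. The unperturbed zeros $t_0^*=\ep(\vp+\frac\pi2+n\pi)$, $n\in\ZZ$, of the principal term are non-degenerate, with derivative $\mp\ep^{r-1/2}\sqrt{\cA^2+\cB^2}$. Applying the intermediate value theorem on a small $t_0$-interval around each $t_0^*$ and using the size comparison from the previous step yields a zero of~$\cD$. For \emph{transversality}, I would appeal to the fact that the expansion of Theorem~\ref{T.Melnikov}, being obtained through stationary phase applied to an integral representation of $\cD$, automatically enjoys a corresponding $C^1$ control in~$t_0$ (the derivative in~$t_0$ is implemented inside the integral and produces a factor of $1/\ep$ in both the main term and the remainder). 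Consequently $\cD'(t_0)$ has main term of order $\ep^{r-1/2}$ with error of strictly smaller order, and is therefore nonzero at every zero of~$\cD$ for $\ep$ small. This gives the desired transverse intersection of the stable and unstable manifolds of the hyperbolic fixed point associated with~$x_0$.

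Finally, I would invoke the Smale--Birkhoff homoclinic theorem (see e.g.\ \cite[\S 5.3]{GH}), which asserts that any diffeomorphism possessing a transverse homoclinic point to a hyperbolic fixed point admits an invariant hyperbolic set on which some iterate is conjugate to a full shift on finitely many symbols. The conjugation to the shift immediately produces positive topological entropy. Applying this to the time-$2\pi\ep$ (or time-$2\pi$) Poincar\'e map of the non-autonomous system~\eqref{eqx} at the transverse homoclinic orbit obtained above concludes the proof. The main delicate point is the transversality in the second step: the key is that the stationary-phase asymptotics underlying Theorem~\ref{T.Melnikov} controls not merely $\cD$ but also $\pd_{t_0}\cD$ at the same relative precision, since differentiation in~$t_0$ commutes with the integral defining the displacement. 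If preferred, one can phrase the transversality geometrically, by noting that $\cD(t_0)$ measures a $C^1$-small perturbation of the sinusoid $\ep^{r+1/2}\sqrt{\cA^2+\cB^2}\cos(t_0/\ep-\vp)$ whose zeros are clearly transverse.
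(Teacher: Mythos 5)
Your outline is the intended route: the paper offers no written proof of Corollary~\ref{C.splitting} (it is stated as an immediate consequence of Theorem~\ref{T.Melnikov}), and the steps you give — the main oscillatory term has exact amplitude $\ep^{r+1/2}\sqrt{\cA^2+\cB^2}$, both error exponents satisfy $r+1>r+\tfrac12$ and $2r-2>r+\tfrac12$ precisely because $r>\tfrac52$, sign changes of $\cD$ produce homoclinic points, and Smale--Birkhoff for the time-$2\pi\ep$ Poincar\'e map yields positive entropy — are exactly how one fleshes it out.

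The step you should not present as automatic is the $C^1$ control needed for transversality. Your justification, that ``differentiation in $t_0$ commutes with the integral defining the displacement,'' conflates $\cD$ with $\cM$: the displacement is defined geometrically through the intersection points of the perturbed invariant manifolds with the section $\Si$ (Equation~\eqref{displacement}), not by an integral; only the Melnikov function is. Proposition~\ref{P.Melnikov} gives a purely uniform bound $|\cD-\cM|\le C\ep^{2r-2}$, and Remark~\ref{rem:rem} explicitly warns that derivative bounds for this difference are \emph{not} of order $\ep^{2r-2}$ because of the rapid oscillations; a merely $C^0$-small error could in principle turn a sign change into a tangential crossing, so transversality genuinely needs the derivative estimate. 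Your numerology is the right one: differentiating $\widetilde\cM(t_0)=\ep^r\sum_k\Real[e^{-it_0/\ep}m_k]$ only multiplies by $\ep^{-1}$ (the $m_k$ do not depend on $t_0$), giving a main term of order $\ep^{r-1/2}$ with stationary-phase errors $O(\ep^{r})$, and the expected bound $\partial_{t_0}(\cD-\cM)=O(\ep^{2r-3})$ is again dominated since $2r-3>r-\tfrac12$ for $r>\tfrac52$. But that last bound requires an actual argument — quantitative smooth dependence of $z^{\mathrm u}_\ep(\cdot;t_0)$ and $z^{\mathrm s}_\ep(\cdot;t_0)$ on $t_0$, i.e.\ rerunning the proof of Proposition~\ref{P.Melnikov} at the level of $t_0$-derivatives — which neither the paper's Step~1 nor your proposal supplies. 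With that estimate in hand (or, for the entropy statement alone, by invoking a topological-crossing criterion instead of transversality), the rest of your argument is standard and correct.
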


% \marginpar{Propuesa de remark. Para que no parezca que si
% $r\leq 5/2$ el Melnikov podria no dominar.}
% \begin{remark}
% \textcolor{blue}{
% The condition $r>5/2$ is required to ensure that the size of the splitting
% is well predicted by the Melnikov function, so that we can neglect the
% term $O(\ep^{2r-2})$. As we will point out in Remark~\ref{rem:rem}, this
% bound on the remainder is quite pessimistic and corresponds to a brute
% overestimating of an oscillatory integral. Thus, the condition $r>5/2$
% can be improved by repeating mutatis mutandis the
% computations presented in this paper.}
% \end{remark}

It is worth mentioning that the ideas of the proof of
Theorem~\ref{T.Melnikov} apply to much more general perturbations. As
a rule of thumb, one can handle
perturbations that oscillate roughly like~\eqref{g}, meaning that a
significant part of the perturbation (say, as measured with the $L^2$~norm) is
concentrated over frequencies of order~$1/\ep$. In
particular, in Section~\ref{S.qp} we will consider perturbations that
are quasiperiodic in~$t$, say of the form
\[
g (x,\dot x,t; \ep) = q\bigg(\frac x\ep,\dot x;\ep\bigg)\,
F\Big( \frac{\om_1 t}\ep,\dots, \frac{\om_n t}\ep\Big)
\]
with a~$C^3$ function $F:\TT^n\to\RR$, and derive an analogous expression for the displacement
function (Theorem~\ref{T.qp}). To keep the technicalities to a minimum, we have chosen not
to state the result in the Introduction.

% \section{A stationary phase lemma}
% \label{S.lemma}

\section{Proof of Theorem~\ref{T.Melnikov}}
\label{S.proof}

Without any loss of generality we can assume that the hyperbolic
equilibrium of the unperturbed system
\[
\ddot x= f(x)
\]
is located at the origin $z=0$, with
\[
z:=(x,\dot x)\,.
\]
Likewise, it is convenient to introduce the notation
\[
z_0(t):=(x_0(t),\dot x_0(t))
\]
for the integral curve parametrizing the homoclinic connection. Since
the function~$f$ is of class~$C^3$, $z_0$ is a $C^3$~function of time.

\subsubsection*{Step 1: The Melnikov function and estimate for the
  error term}

Let us fix a point in the separatrix (understood as a curve in the
two-dimensional phase space of coordinates $z=(x,\dot x)$), for example $Z:=z_0(0)$, and
consider a normal section to the separatrix at that point, which is
given by a segment~$\Si$ passing through the point~$Z$ and parallel to
the vector $ (-f(x_0(0)), \dot x_0(0))$.
In the rest of the proof, $K$~will denote a compact subset of the
plane containing the homoclinic connection. Since the perturbation is $C^1$-bounded as
\begin{equation}\label{C1g}
  \|\ep^r g(\cdot,\cdot,\cdot;\ep)\|_{C^1(K\times\RR)}< C\ep^{r-1}
\end{equation}
because~$g$ is of the form~\eqref{g}, the
invariant manifold theorem~\cite{HPS} ensures that for small
enough~$\ep$ the perturbed system also has a hyperbolic
point~$p_{\ep,t_0}$
close to the origin, and that it has a stable manifold and an unstable
manifold locally close to the separatrix. Here, with some abuse of
notation, we are saying that~$p_{\ep,t_0}$ is a hyperbolic point in
the sense that it is a fixed point of the return map corresponding to
the section $\{t=t_0\}$ for the extended system $\ddot x= f(x)+\ep^r\,
g(x,\dot x,t;\ep)$, $\dot t=1$.

More precisely, the first
intersection points~$Z_{\ep,t_0}\s$, $Z_{\ep,t_0}\un$ of the stable
and unstable manifolds of the perturbed system with the segment~$\Si$
are at a distance at most $C\ep^{r-1}$ of~$Z$. Furthermore, the integral
curves
\[
z_\ep\s(t;t_0)= (x_\ep\s(t;t_0),\dot x_\ep\s(t;t_0))\, ,\qquad  z_\ep\un(t;t_0)= (x_\ep\un(t;t_0),\dot x_\ep\un(t;t_0))
\]
of the perturbed system, defined through the initial conditions
\[
z_\ep\s(t_0;t_0)=Z_{\ep,t_0}\s\,,\qquad z_\ep\un(t_0;t_0)=Z_{\ep,t_0}\un\,,
\]
satisfy
\begin{equation}\label{boundsz}
\sup_{t>t_0} |z_\ep\s(t;t_0)-z_0(t-t_0)|+ \sup_{t<t_0} |z_\ep\un(t;t_0)-z_0(t-t_0)|<C\ep^{r-1}\,.
\end{equation}
In the case of the unperturbed system, notice that the eigenvalues of the hyperbolic equilibrium
are of the form $\pm\la$ with $\la>0$ a constant independent
of~$\ep$, so it is well known that the unperturbed integral curve
satisfies
\begin{equation}\label{asymptz0}
|x_0(t)|+ |\dot x_0(t)| + |\ddot x_0(t)|<Ce^{-\la|t|}
\end{equation}
for all~$t$.
% Notice, moreover, that the eigenvalues of the hyperbolic equilibrium
% of the perturbed system are of the form
% \[
% \la_{\ep,t_0}\un=\la + O(\ep^{r-1})\,,\qquad \la_{\ep,t_0}\s=-\la + O(\ep^{r-1})\,,
% \]
% with $\la>0$ an $\ep$-independent constant, so in particular the above
% integral curves tends exponentially to the equilibrium point as
% \begin{equation}\label{asymptz}
% |z_\ep\s(t;t_0)- (p_{\ep,t_0},0)|+ |z_\ep\s(-t;t_0)- (p_{\ep,t_0},0)| < C e^{-[\la+O(\ep^{r-1})]t}
% \end{equation}
% for $t>0$.
% Likewise, there is an easy $L^1$~version of~\eqref{boundsz}
% asserting that
% \begin{equation}\label{boundsz2}
% \int_{t_0}^\infty |\dot x_\ep\s(t;t_0)-\dot x_0(t-t_0)|dt+ \int_{-\infty}^{t_0}|\dot x_\ep\un(t;t_0)-\dot x_0(t-t_0)|dt<C\ep^{r-1}\,.
% \end{equation}
% Obviously in the case of the unperturbed system one has
% analogous estimates, such as $|z_0(t)|<Ce^{-\la|t|}$ for all~$t$.

It is standard~\cite[Section 4.5]{GH} that the displacement function is defined as
\begin{equation}\label{displacement}
\cD(t_0):= \al\big[z_\ep\un(t_0;t_0)-z_\ep\s(t_0;t_0)\big]\,,
\end{equation}
where for later convenience we have introduced an inessential nonzero constant
independent of the perturbation parameter,
\[
\al:=\big[\dot x_0(0)^2+f(x_0(0))^2\big]^{\frac12}\,.
\]
Since the perturbations that we are considering are rapidly
oscillating, the connection between the Melnikov integral and the
displacement function is rather different from the usual one:

\begin{proposition}\label{P.Melnikov}
The Melnikov function determines the leading order contribution to the
displacement function in the sense that
\[
|\cD(t_0)-\cM(t_0)|\leq C\ep^{2r-2}
\]
uniformly on compact time intervals.
\end{proposition}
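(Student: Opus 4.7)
The plan is to linearize the perturbed flow along the homoclinic orbit and convert the resulting first variation into the Melnikov integral via a scalar identity adapted to the trace-free variational equation. Throughout I set $\delta\un(t;t_0):=z_\ep\un(t;t_0)-z_0(t-t_0)$ and $\delta\s(t;t_0):=z_\ep\s(t;t_0)-z_0(t-t_0)$, both of size $O(\ep^{r-1})$ by \eqref{boundsz}, and exploit that the vector field $F(z):=(\dot x,f(x))$ has trace-free Jacobian $A(s):=DF(z_0(s))$.

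\emph{Step 1 (linearization).} Writing the perturbed system as $\dot z = F(z)+\ep^r G(z,t;\ep)$ with $G=(0,g)$ and substituting $z=z_0+\delta\un$ (resp.~$\delta\s$), I Taylor-expand around $z_0$. Two error contributions appear: the quadratic remainder of $F$, which is $O(|\delta|^2)=O(\ep^{2r-2})$ since $f\in C^3$; and the variation of the rapidly oscillating perturbation, bounded by $\|\ep^r G\|_{C^1}\,|\delta| \leq C\ep^{r-1}\cdot\ep^{r-1}=O(\ep^{2r-2})$ thanks to \eqref{C1g}. Hence
\[
\dot\delta\un = A(t-t_0)\,\delta\un + \ep^r G(z_0(t-t_0),t;\ep) + R\un(t;t_0),
\]
with $\sup_{t}|R\un(t;t_0)|\leq C\ep^{2r-2}$, and an analogous identity holds for $\delta\s$ with remainder $R\s$.

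\emph{Step 2 (scalar identity and integration).} Since $\dot z_0$ solves the unperturbed variational equation $\dot y=A(t)y$ and $\tr A\equiv 0$, the wedge product $\dot z_0\wedge\delta := \dot x_0\,\delta_2 - f(x_0)\,\delta_1$ obeys
\[
\frac{d}{dt}\bigl[\dot z_0(t-t_0)\wedge\delta(t;t_0)\bigr] = \dot x_0(t-t_0)\,\ep^r g(z_0(t-t_0),t;\ep) + \dot z_0(t-t_0)\wedge R(t;t_0).
\]
Integrating this over $(-\infty,t_0)$ for the unstable branch and over $(t_0,+\infty)$ for the stable branch, the exponential decay \eqref{asymptz0} together with the uniform boundedness of $\delta\un,\delta\s$ (which follows from the $O(\ep^{r-1})$ closeness of $p_{\ep,t_0}$ to the origin and the invariant manifold theorem) kills the boundary terms at $\pm\infty$. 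Subtracting the two identities produces $\dot z_0(0)\wedge[\delta\un(t_0;t_0)-\delta\s(t_0;t_0)]$ on the left, which coincides with $\cD(t_0)$ because the section $\Sigma$ is orthogonal to $\dot z_0(0)=(\dot x_0(0),f(x_0(0)))$ and the functional $\al[\,\cdot\,]$ is nothing but $\dot z_0(0)\wedge(\cdot)$. On the right, the substitution $s=t-t_0$ yields $\cM(t_0)$, plus an error bounded by $C\ep^{2r-2}\int_\RR|\dot z_0(s)|\,ds=O(\ep^{2r-2})$.

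\emph{Main obstacle.} The delicate point is Step~1, where one must verify that the ``variation of the perturbation'' term $\ep^r[G(z_0+\delta,t;\ep)-G(z_0,t;\ep)]$ really contributes only $O(\ep^{2r-2})$ rather than something larger. Because $\ep^r g$ is small in $C^0$ but large in $C^1$ (of order $\ep^{r-1}$), a naive linearization would blow up; the estimate survives precisely because $\delta\un$ and $\delta\s$ are themselves of size $\ep^{r-1}$, so the two small factors combine. This is the origin of the error exponent $2r-2$ in the proposition and also explains the hypothesis $r>5/2$ of Theorem~\ref{T.Melnikov}, which is what renders $\ep^{2r-2}$ negligible compared to the leading $\ep^{r+1/2}$ contribution to $\cD(t_0)$.
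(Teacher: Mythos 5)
Your proposal is correct and follows essentially the same route as the paper: the paper likewise differentiates the quantity $X_f(z_0(t-t_0))\times(z_\ep^{\mathrm{u,s}}-z_0)$ (your $\dot z_0\wedge\delta$, since $X_f(z_0)=\dot z_0$), uses the trace-free Jacobian to cancel the linear terms, bounds the remainder by combining the quadratic Taylor error with the product of the $C^1$ bound \eqref{C1g} and the $O(\ep^{r-1})$ closeness \eqref{boundsz}, and integrates over the two half-lines with vanishing boundary terms. The only difference is presentational (you first write the linearized equation for $\delta$ and then form the wedge, while the paper differentiates the wedge directly), and your retention of the decaying factor $\dot z_0$ in the error term matches the paper's bound $|\cR|\leq C\ep^{2r-2}|X_f|$, which is what makes the error integrable.
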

\begin{proof}
To derive the formula, let us write the perturbed equation as a first order differential equation
\[
\dot z = X_f(z) + \ep^r X_g(z,t;\ep)\,,
\]
where we are using the notation
\[
X_f(z):=(\dot x, f(x))\,,\qquad X_g(z,t;\ep):=(0,g(x,\dot x,
t,\ep))\,.
\]
It is clear that
\[
\cD(t_0)= \cD\un(t_0;t_0)-\cD\s(t_0;t_0)\,,
\]
where the time-dependent displacement functions are defined as
\begin{align*}
\cD\un (t;t_0)&:= X_f(z_0(t-t_0)) \times (z_\ep\un(t;t_0)-z_0(t-t_0))\,,\\
\cD\s (t;t_0)&:=X_f(z_0(t-t_0)) \times (z_\ep\s(t;t_0)-z_0(t-t_0))
\end{align*}
and we have defined the cross product of two vectors in~$\RR^2$ as the
scalar quantity
\[
X\times Y:= X_1Y_2-X_2Y_1\,.
\]

Taking the case of the unstable manifold for concreteness, let us
compute the derivative of $\cD\un$ with respect to~$t$. To keep the
expressions simple, we will omit the arguments of the functions
when no confusion may arise:
\begin{align*}
\dot \cD\un = {} & DX_f (z_0) \dot z_0 \times (z_\ep\un -z_0) + X_f(z_0) \times (\dot z_\ep\un-\dot z_0) \\
= {} & DX_f (z_0) \dot z_0 \times (z_\ep\un -z_0) + X_f(z_0) \times [X_f(z_\ep\un)+\ep^r X_g(z_\ep\un) - X_f(z_0)] \\
= {} & DX_f (z_0) \dot z_0 \times (z_\ep\un -z_0) + X_f(z_0) \times [DX_f(z_0) (z_\ep\un -z_0)+\ep^r X_g(z_0)] + \cR \\
= {} & (\tr DX_f)\, \dot z_0 \times (z_\ep\un -z_0) + \ep^r X_f(z_0) \times X_g(z_0) + \cR\\
= {} & \ep^r \dot x_0 (t-t_0)\, g(x_0(t-t_0),\dot x_0(t-t_0), t;\ep) + \cR\,,
\end{align*}
where the trace of~$DX_f$ (which is zero) appears because
\[
 (\tr DX_f)\, \dot z_0 \times (z_\ep\un -z_0) =  DX_f (z_0) \dot z_0 \times (z_\ep\un -z_0) + X_f(z_0) \times [DX_f(z_0) (z_\ep\un -z_0)]
\]
and  the error term,
\begin{multline*}
\cR:= X_f(z_0)\times \big[ X_f(z_\ep\un)-X_f(z_0)-DX_f(z_0)\,
       (z_\ep\un-z_0)\big] \\
+ \ep^rX_f\times
  [X_g (z_\ep\un)-X_g(z_0)]\,,
\end{multline*}
is bounded for $t\leq t_0$ as
\begin{align*}
|\cR|
& \leq C\|X_f\|_{C^2(K\times\RR)}|X_f||z_\ep\un -z_0|^2+ C\ep^r
  \|X_g\|_{C^1(K\times\RR)}|X_f||z_\ep\un-z_0|\\
& \leq C\ep^{2r-2}|X_f|\,,
\end{align*}
where we have used the bounds~\eqref{boundsz}. The stable displacement
function can
be estimated for $t\geq t_0$ using a completely analogous argument. Since $f(0)=0$, using the bound~\eqref{asymptz0} together with the obvious fact that
\[
\lim_{t\to\infty} \cD\s(t;t_0)=\lim_{t\to-\infty}\cD\un(t;t_0)=0\,,
\]
we can integrate the above expression for
the derivative of the stable and unstable parts of the displacement
function to obtain
\[
|\cD(t_0)- \cM(t_0)|\leq C\ep^{2r-2}\,,
\]
as claimed.
\end{proof}

\begin{remark}\label{rem:rem}
In what follows we will only need a uniform bound for the
difference~$\cD(t_0)-\cM(t_0)$, which is what we prove. The method of
proof, however, permits to estimate higher order derivatives too
if~$f$ and~$g$ are regular enough, and one should notice that they are
{\em not}\/ of order~$\ep^{2r-2}$, even if $f$ and~$g$ are analytic, due to the rapid oscillations of the
perturbation.
\end{remark}

\subsubsection*{Step 2: Estimates for the critical points of the phase
functions}

Let us now write the Melnikov integral as
\begin{align}
\cM(t_0)&= \ep^r\int_{-\infty}^\infty \dot x_0(t)\,
          q\bigg(\frac{x_0(t)}{\ep},\dot x_0(t);\ep\bigg)\,
          \cos\frac {t+t_0}\ep\, dt\notag\\
%&= \ep^r\int_{-\infty}^\infty \dot x_0(t)\,
%          q\bigg(\frac{x_0(t)}{\ep},\dot x_0(t);0\bigg)\,
%          \cos\frac {t+t_0}\ep\, dt+\cR_1(t_0)\notag\\
&= \tcM(t_0)+\cR_1(t_0)\,, \label{error1}
\end{align}
where
\[
\tcM(t_0):= \ep^r\int_{-\infty}^\infty \dot x_0(t)\,
          q\bigg(\frac{x_0(t)}{\ep},\dot x_0(t);0\bigg)\,
          \cos\frac {t+t_0}\ep\, dt
\]
and the error
\begin{align*}
\cR_1(t_0)&:= \ep^r\int_{-\infty}^\infty \dot x_0(t)\,
          \bigg[q\bigg(\frac{x_0(t)}{\ep},\dot x_0(t);\ep\bigg) -q\bigg(\frac{x_0(t)}{\ep},\dot x_0(t);0\bigg)\bigg]\,
          \cos\frac {t+t_0}\ep\, dt
\end{align*}
is uniformly bounded as
\begin{equation}\label{cR1}
|\cR_1(t_0)|\leq
\ep^r\|q(\cdot,\cdot;\ep)-q(\cdot,\cdot;0)\|_{C^0(K)}\int_{-\infty}^\infty
|\dot x_0(t)|\, dt\leq C\ep^{r+1}
\end{equation}
because
{$q(\xi,v;\ep)$}
depends differentiably on~$\ep$ and $|\dot
x_0(t)|$ falls off exponentially by~\eqref{asymptz0}.

Writing $\tcM(t_0)$ in terms of the Fourier
coefficients $q_k(v)$ of $q(\xi,v;0)$,
we have
\[
\tcM(t_0):= \ep^r\sum_{k=-\infty}^\infty \int_{-\infty}^\infty \dot x_0(t)\,
          q_k(\dot x_0(t))\,e^{ikx_0(t)/\ep}
          \cos\frac {t+t_0}\ep\, dt\,.
\]
Setting
\[
\vp_k(t):= kx_0(t)-t\,,
\]
this yields
\begin{align}
\tcM(t_0)&= \ep^r\sum_{k=-\infty}^\infty \Real \bigg[ e^{-it_0{/\ep}}\int_{-\infty}^\infty \dot x_0(t)\,
          q_k(\dot x_0(t))\,e^{i\vp_k(t)/\ep}\,
           dt\bigg]\notag\\
&=:\ep^r\sum_{k=-\infty}^\infty \Real [e^{-it_0/\ep}m_k]\,. \label{mks}
\end{align}

Our goal is to compute the constants~$m_k$ to leading order
in~$\ep$. Notice that the set~$\cC_k$ defined in the statement of
Theorem~\ref{T.Melnikov} is the locus of the critical points of the
phase function~$\vp_k$. We will need the following estimates for the
behavior of the phase function~$\vp_k$ around points in the
set~$\cC_k$:

\begin{lemma}\label{L.cCk}
There is a number~$N$, independent of~$k$ and~$\ep$, such that $\cC_k$
consists of at most~$N$ points~$t_{k,j}^*$. Moreover, there are
positive constants~$\eta,c,C$ and $T_0$, independent of~$k$ and~$\ep$, such that the
function
\begin{equation}\label{vpk}
\vp_k(t):=k x_0(t)-t
\end{equation}
satisfies
\begin{equation}\label{unif}
|\dot\vp_k(t)|>c
\end{equation}
for any~$k$ provided that $t$~does not lie on any of the
pairwise disjoint intervals $I_{k,j}:=(t_{k,j}^*-2\eta,
t_{k,j}^*+2\eta)$. Furthermore, for all~$t\in I_{k,j}$ one has:
\begin{enumerate}
\item If $|t^*_{k,j}|\leq T_0$,
\[
|\dot\vp_k(t)|\geq c|k||t-t^*_{k,j}|\,, \qquad |\ddot\vp_k(t^*_{k,j})|>c|k|\,,\qquad|\ddot\vp_k(t)| + |\dddot\vp_k(t)|<C|k|\,.
\]
\item If $|t^*_{k,j}|>T_0$,
\[
|\dot\vp_k(t)|\geq c|t-t^*_{k,j}|\,,\qquad
|\ddot\vp_k(t^*_{k,j})|>c\,,\qquad |\ddot\vp_k(t)| + |\dddot\vp_k(t)|<C\,.
\]
\end{enumerate}
\end{lemma}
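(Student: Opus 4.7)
The strategy rests on the three explicit formulas
\[
\dot\vp_k(t)=k\dot x_0(t)-1,\qquad \ddot\vp_k(t)=kf(x_0(t)),\qquad \dddot\vp_k(t)=kf'(x_0(t))\dot x_0(t),
\]
together with two structural features of the unperturbed homoclinic: the exponential decay~\eqref{asymptz0}, and the linearizations $f(x)=\la^2 x+O(x^2)$ and $\dot x_0(t)=\pm\la x_0(t)+O(x_0(t)^2)$ as $t\to\pm\infty$, coming from the hyperbolic equilibrium at the origin.

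For the cardinality bound I would first observe that the critical points of $\dot x_0$ are precisely the zeros of the continuous function $t\mapsto f(x_0(t))$, a finite set by the exponential decay; say it has cardinality at most~$N_0$. Since $\dot x_0$ is strictly monotone between consecutive critical points, every level set $\{\dot x_0=1/k\}$ meets each such interval in at most one point, so $|\cC_k|\leq N:=N_0+1$ uniformly in~$k$.

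Next I would fix $T_0$ large enough that for $|t|>T_0$ the two linearizations above are valid up to controlled errors. For $t^*\in\cC_k$ in the far regime $|t^*|>T_0$, the relation $\dot x_0(t^*)=1/k$ forces $x_0(t^*)=\pm 1/(\la k)+O(1/k^2)$, whence $\ddot\vp_k(t^*)=kf(x_0(t^*))=\pm\la+O(1/k)$ is uniformly of order one; the same argument shows $|x_0(t)|,|\dot x_0(t)|=O(1/|k|)$ throughout $I_{k,j}$, so that $|\ddot\vp_k|,|\dddot\vp_k|\leq C$ there. In the near regime $|t^*|\leq T_0$, the hypothesis $f(x_0(t^*))\neq 0$, combined with continuity of $f\circ x_0$ and a compactness argument on the (closure of the) candidate set of critical points, yields a uniform bound $|f(x_0(t^*))|\geq c'>0$ and hence $|\ddot\vp_k(t^*)|\geq c'|k|$; the matching upper bounds on $I_{k,j}$ follow immediately from the boundedness of $x_0$ and $\dot x_0$ on compact sets. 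In both regimes, a Taylor expansion of $\dot\vp_k$ around $t^*$ using the bounds on $\ddot\vp_k$ and $\dddot\vp_k$ produces the claimed linear lower bounds on $|\dot\vp_k|$ inside $I_{k,j}$, provided $\eta$ is chosen small enough independently of~$k$.

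Finally, to establish the uniform bound~\eqref{unif} outside $\bigcup_j I_{k,j}$, I would decompose $\RR$ according to the value of $k\dot x_0(t)$: wherever $|k\dot x_0(t)-1|\geq 1/2$ the estimate is trivial, so only the transition region $\{k\dot x_0(t)\in[1/2,3/2]\}$ needs attention. Using the lower bounds on $|\ddot\vp_k|$ already obtained at the critical points inside each connected component, one shows that each such component has length at most $O(1/|k|)$ in the near regime and $O(1)$ in the far regime, contains exactly one element of~$\cC_k$, and, for $|k|$ above an explicit threshold, is contained entirely in the corresponding~$I_{k,j}$; the finitely many remaining small values of~$|k|$ are absorbed by a direct compactness argument on a fixed bounded region. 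The main technical obstacle is the simultaneous calibration of $T_0$, $\eta$ and $c$: these parameters must be chosen in a coupled way so that the far-regime estimates (which are of order one, independent of~$k$) and the near-regime compactness estimates (which scale with $|k|$) produce compatible constants, and in particular so that every transition region fits inside the interval $I_{k,j}$ uniformly in the nonzero integer~$k$.
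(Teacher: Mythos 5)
Your plan follows the same overall strategy as the paper's proof: use the explicit formulas for $\dot\vp_k,\ddot\vp_k,\dddot\vp_k$, exploit the linearization at the hyperbolic point for the far regime $|t^*|>T_0$, use compactness and the nondegeneracy hypothesis for the near regime, and then Taylor-expand $\dot\vp_k$ near each critical point to get the quantitative bounds. The one place where you genuinely deviate from the paper is the argument for the uniform cardinality bound, and that is exactly where there is a gap.

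You bound $|\cC_k|$ by counting intervals of monotonicity of $\dot x_0$, asserting that the critical points of $\dot x_0$ — that is, the zeros of $\ddot x_0(t)=f(x_0(t))$ — form ``a finite set by the exponential decay.'' The exponential decay of $x_0$, together with $f(0)=0$ and $f'(0)=\la^2\neq0$, rules out zeros of $f\circ x_0$ only for $|t|$ larger than some $T$; it says nothing about how many zeros $f\circ x_0$ has on the compact interval $[-T,T]$. Since $f$ is only assumed to be $C^3$, $f\circ x_0$ can perfectly well have infinitely many zeros there (think of $f$ vanishing to high finite order with a wiggle, as with $x^4\sin(1/x)$, at a point $x_*$ in the range of $x_0$ that $x_0$ crosses transversally). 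The theorem's hypothesis $f(x_0(t^*))\neq0$ is imposed only for $t^*\in\cC_k$, not at all zeros of $\ddot x_0$, so it does not save the argument; and the homoclinic property $(\dot x_0,f(x_0))\neq(0,0)$ prevents $\dot x_0$ and $\ddot x_0$ from vanishing simultaneously, but it does not make the zeros of $\ddot x_0$ nondegenerate. In short, the quantity $N_0$ on which your bound $|\cC_k|\leq N_0+1$ rests may be infinite.

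The paper sidesteps this by bounding $\cC_k$ against the zero set of $\dot x_0$ rather than the critical set of $\dot x_0$. The zeros of $\dot x_0$ \emph{are} nondegenerate, because at any such point $\ddot x_0=f(x_0)\neq 0$ by the homoclinic (no-equilibria) condition; hence they are finite. One then compares the equation $\dot x_0(t)=1/k$ with the limit equation $\dot x_0(t)=0$ as $|k|\to\infty$ and invokes the implicit function theorem: for $|k|$ large each zero of $\dot x_0-1/k$ in $[-T,T]$ lies near a zero of $\dot x_0$, and nondegeneracy gives exactly one nearby solution, so $|\cC_k\cap[-T,T]|$ is uniformly bounded; the tails $|t^*|>T$ contribute at most one point each by the linearized analysis. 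You should replace your monotonicity-interval count with this comparison argument. The remaining portions of your plan — the far-regime asymptotics (obtained via the $C^3$ linearization of the one-dimensional stable/unstable dynamics), the compactness argument for the near regime, and the Taylor expansion inside $I_{k,j}$ — are sound and essentially match the paper; your decomposition of the exterior region by the size of $k\dot x_0(t)$ is a reasonable alternative route to \eqref{unif}, though you would still need to rule out transition components of $\{k\dot x_0\in[1/2,3/2]\}$ that contain no element of $\cC_k$ yet on which $|\dot\vp_k|$ is uncontrollably small; the paper's explicit asymptotics for $k\dot x_0(t)-1$ in the far regime take care of this directly.
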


\begin{proof}
Let us start by noticing that $\cC_0$ is
the empty set, so in what follows we will assume that
$k\neq0$.
A first observation is that~$\cC_k$ is a discrete set. This follows
from the fact that the
zeros of the function
\begin{equation}\label{zeros}
\dot x_0(t^*)-\frac1k
\end{equation}
(which are the points in~$\cC_k$) are non-degenerate, as its derivative
\begin{equation}\label{der}
\ddot x_0(t^*)=f(x_0(t^*))
\end{equation}
is nonzero whenever $t^*\in\cC_k$ by hypothesis.
Hence, there is a finite number of zeros of Equation~\eqref{zeros}
on any compact interval, for any given~$k$. We will show next that the
bound is uniform in~$k$ by considering the case $|k|\to\infty$.

The key is to show that the solutions to the limit equation
\begin{equation}\label{limit}
\dot x_0(t)=0
\end{equation}
are finite. Notice that the derivative of this function, also given
by~\eqref{der}, is nonzero on the solutions of~\eqref{limit} because
there cannot be any equilibria on the homoclinic connection (recall
that this simply means that
\[
(\dot x_0(t),f(x_0(t)))\neq (0,0)
\]
for all $t\in\RR$). This implies that the set of solutions
of~\eqref{limit}, which we call
$\{T^*_m\}$ is finite on any compact
subset of the real line.
Restricting our attention to a compact interval $|t^*|<T$, it is then
a straightforward consequence of the implicit function theorem and the
analysis of the limit equation~\eqref{limit} that the number of zeros
of~\eqref{zeros} with $|t^*|<T$ remains bounded as $|k|\to\infty$.
Notice that it follows from this argument that the intersection of the
set~$\cC_k$ with any large finite interval $[-T,T]$ (with $T$
independent of~$k$ and~$\ep$) converges as $|k|\to\infty$ to the set of solutions of the
limit equation~\eqref{limit}, so in particular the cardinality of
$\cC_k\cap [-T,T]$ is bounded uniformly in~$k$. Notice moreover that, since
\begin{gather*}
\dot\vp_k(t)=k\dot x_0(t)-1\,,\quad \ddot\vp_k(t)=kf(x_0(t))\,,\quad
\dddot\vp_k(t)=kf'(x_0(t))\,\dot x_0(t)\,,
\end{gather*}
one immediately infers the bounds presented in item~(i).

Hence we can concentrate on controlling what happens for large~$|t|$.
Specifically, we will show that the number of zeros
of~\eqref{zeros} with $t^*>T$ or $t^*<-T$ is at most one, for large enough~$T$.
To this end, let us recall that, since $x_0(t)$ is a homoclinic connection of a hyperbolic equilibrium,
it is standard that there
are nonzero constants~$c_+$, $c_-$ such that
\begin{equation}\label{asympt}
\dot x_0(t)= \begin{cases}
\la  c_+ \, e^{-\la t}\, (1+\cE_+(t)) &\text{for } t>0\,,\\
\la  c_-  \, e^{\la t}\, (1+\cE_-(t)) &\text{for } t<0\,,
\end{cases}
\end{equation}
where the errors~$\cE_\pm(t)$ are bounded as
\begin{equation}\label{boundcE}
|\cE_\pm(t)|+|\dot\cE_\pm(t)|
+|\ddot\cE_\pm(t)|
< Ce^{-\la |t|}
\end{equation}
as $t\to\pm\infty$. In order to see this, observe that, by the stable
manifold theorem and the fact that $f$ is of class $C^3$,
the unstable manifold contained in the homoclinic
connection of the unperturbed system can be locally written as a
${C^3}$~graph  in the $(x,\dot x)$-plane,
that is,
\[
\dot x = h(x)
\]
with $h$ a ${C^3}$~function defined in a neighborhood of~0 with the
asymptotic behavior
\[
h(x)=\la x+ O(x^2)\,.
\]
As a hyperbolic one-dimensional system of class~$C^3$ can be conjugated to its linear
part through a $C^3$~diffeomorphism, there is a local variable~\cite{Beli}
\[
y:=\Phi(x)
\]
in a neighborhood of~0, with
\[
\Phi\in C^3\,,\qquad \Phi(0)=0\,,\qquad \Phi'(0)=1\,,
\]
such that the integral curve describing the unstable manifold can be
written in this variable as
\[
y(t)=c_-e^{\la t}\,.
\]
Inverting the diffeomorphism to write the unstable manifold in the
original variable~$x$, one readily obtains that for $t\to-\infty$ it is parametrized as
\[
x(t)=c_-e^{\la t}[1+\cO(e^{\la t})]\,,
\]
where the small term can be differentiated three times with the same
bound. This proves the bound~\eqref{boundcE} for~$\cE_-(t)$. The argument for the stable
manifold is analogous.

An immediate consequence of the estimate~\eqref{boundcE} is that $\dot
x_0(t)\neq0$ for all large enough~$|t|$. Therefore, we infer that the
number of solutions of Equation~\eqref{limit}
is finite. To control the solutions with large $t^*$, let us rewrite the equation for the zeros of~\eqref{zeros} as
\begin{equation}\label{eqkk}
c_+ \la\, e^{-\la t^*}\, (1+\cE_+(t^*))=\frac1k\,.
\end{equation}
Here we are supposing that $k$ is a large enough number of the same
sign as~$c_+$, since otherwise there cannot be any solutions with
$t^*$ larger than some fixed constant~$T$ (independent of~$\ep$
and~$k$) by the bounds on~$\cE_+(t^*)$.

If one takes out the
error~$\cE_+(t^*)$, it is clear that the equation
\begin{equation}\label{eqkk2}
c_+\la\, e^{-\la t^*}=\frac1k
\end{equation}
then has a unique solution with $t^*>T$, given in terms of
a logarithm. A judicious application of the inverse function theorem
then shows that~\eqref{eqkk}
has exactly one solution with
$t^*>T$ too.
To see this, define a new variable $\tau:=e^{-\la t^*}$
and let $E_+(\tau)$ be the expression in this variable of the function
$ e^{-\la t^*} \cE_+(t^*).$
The bounds~\eqref{boundcE} ensure that
\[
|E_+(\tau)|\leq C\tau^2\,,\qquad  |E_+'(\tau)|\leq C\tau
\]
for small positive~$\tau$. As Equation~\eqref{eqkk} can be written as
\[
c_+[\la\tau + E_+(\tau)] =\frac1k\,,
\]
the existence of exactly one solution in the interval follows from our
knowledge of Equation~\eqref{eqkk2}. The analysis for $t^*<-T$ is
completely analogous. Notice that, as a byproduct of the above
argument, if $t^*_{k,j}>T_0$ one infers that
\begin{equation*}%\label{t*kj}
t^*_{k,j}= \frac{1+e_{k,j}^*}\la \log (c_+\la k)\,,\qquad
|e_{k,j}^*|<\frac C{\log|k|}\,,
\end{equation*}
which tends to infinity for large~$|k|$, and
\begin{equation}\label{x0t*}
x_0(t^*_{k,j})=-\frac{1+e_{k,j}}{\la k}\,,\qquad
|e_{k,j}|<\frac C{|k|}\,,
\end{equation}
which tends to the equilibrium, located at~0. Of course, the preceding
argument also shows that, if $|t-t^*_{k,j}|<c_0$ with $c_0$ a small
uniform constant, then
\begin{equation}\label{x0t}
\frac1{C|k|} < |x_0(t)|+|\dot x_0(t)|+|\ddot x_0(t)|+|\dddot x_0(t)|< \frac C{|k|}\,.
\end{equation}
We record this fact here for later use.

The previous analysis shows that the cardinality of~$\cC_k$ is
uniformly bounded and that the distance between the points
$t_{k,j}^*\in\cC_k$ is larger than some positive uniform
constant.
Let us now prove the uniform
bound~\eqref{unif}. Since $f(x_0(t^*_{k,j}))\neq0$ by hypothesis, it suffices to obtain
uniform bounds when $|k|\to\infty$ or $|t^*|\to\infty$. Of course, if
$|k|$ is large but $t^*$ remains in a compact interval, it follows from
the above comparison argument with the limit equation~\eqref{limit}
that the uniform bound~\eqref{unif} holds, so we can focus on the case of
large~$|t^*|$. Assuming for concreteness that $t>T$, with~$T$ a large
positive constant, one can now use the asymptotic formula~\eqref{asympt}
to write
\[
k\dot x_0(t)-1=k \la  c_+ \, e^{-\la t}\, (1+\cE_+(t))-1\,.
\]
If $t=t^*_{k,j}+s$, using that
\begin{equation}\label{equnif}
k \la  c_+ \, e^{-\la t^*_{k,j}}\, (1+\cE_+(t^*_{k,j}))=1
\end{equation}
it is possibly to simplify the above expression to obtain
\begin{equation*}
k\dot x_0(t)-1=e^{-\la s}\frac{ (1+\cE_+(t^*_{k,j}+s))}{(1+\cE_+(t^*_{k,j}))}-1\,.
\end{equation*}
In view of the bound~\eqref{boundcE}, it is now clear that
\[
|\dot\vp_k(t)|=|k\dot x_0(t)-1|>c
\]
if $|s|$ is larger than some uniform constant, say $2\eta$.
This proves the estimate~\eqref{unif}.

Finally, we differentiate the
asymptotic expression~\eqref{asympt} and employ the
formula~\eqref{equnif} to obtain
\[
\ddot\vp_k(t^*_{k,j})=k\ddot x_0(t^*_{k,j})=-\la + O(e^{-\la t^*_{k,j}})\,,
\]
and thus the desired lower bounds
\[
|\ddot\vp_k(t^*_{k,j})|>c \quad \text{and} \quad |\dot\vp_k(t)|>c|t-t^*_{k,j}|
\]
for all $t\in I_{k,j}$. Furthermore, the above asymptotic expression also ensures
that
\[
|\dddot\vp_k(t)|\leq C
\]
for all~$t\in I_{k,j}$, which implies the estimates presented in
item~(ii). The lemma then follows.
\end{proof}

\subsubsection*{Step 3: The oscillatory integrals of the Melnikov function}

Let us consider the intervals $I_{k,j}$ introduced in
Lemma~\ref{L.cCk} and take a smooth compactly supported
function~$\chi_{k,j}(t)$ that
{is equal to~1 on the interval $[t^*_{k,j}- \eta,t^*_{k,j}+ \eta]$}
and is supported in the larger interval~$I_{k,j}$.
Since~$\eta$
does not depend on~$\ep$ and~$k$, the
functions~$\chi_{k,j}$ and their derivatives can be assumed to be
bounded by constants independent of~$k$ and~$\ep$. We will also set
\[
\chi_{k,0}(t):=1-\sum_{j=1}^{N_k}\chi_{k,j}(t)
\,,
\]
{where $N_k$ is the cardinality of $\cC_k$.

Let us now write the integral~$m_k$ {in \eqref{mks}} as
\[
m_k= \sum_{j=0}^{N_k} m_{k,j}\,,
\]
where
\[
m_{k,j}:=\int_{-\infty}^\infty \dot x_0(t)\,
          q_k(\dot x_0(t))\,e^{i\vp_k(t)/\ep}\, \chi_{k,j}(t)\,
           dt
\]
Our first goal is to estimate the terms~$m_0$ and~$m_{k,0}$, where the phase function
does not have any critical points in the region where the integrand is
nonzero.
In the proof of these estimates we
will use that~\cite[Theorem 3.3.9]{Grafakos}, as the function $q(\xi,v;0)$
is of class $C^3(K)$, its Fourier coefficients decay as
\begin{equation}\label{decayqk}
\|q_k\|_{C^l(K_2)}\leq CM(1+|k|)^{l-3}
\end{equation}
for all~$0\leq l\leq3$, where
$$
M:=\|q\|_{C^3(K)}
$$
and the constant~$C$ does not depend on~$q$.
Here and in what follows, $K_l\subset\RR$ denotes the
projection of the set~$K\subset\RR^2$ on the $l$th coordinate, with $l=1,2$.

\begin{lemma}\label{L.mk0}
The terms $m_0$ and $m_{k,0}$ with $k\neq0$ are bounded as
\[
|m_0|<CM \ep\,,\qquad |m_{k,0}|<CM\ep|k|^{-2}\,.
\]
\end{lemma}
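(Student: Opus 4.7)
The plan is to bound both $m_0$ and $m_{k,0}$ by a single integration by parts in the phase $e^{i\vp_k(t)/\ep}$, exploiting the non-stationary phase on the support of the integrand. By Lemma~\ref{L.cCk}, on the support of $\chi_{k,0}$ (which for $k\neq 0$ avoids the intervals $I_{k,j}$ containing the critical points of $\vp_k$, and for $k=0$ is simply all of $\RR$ since $\dot\vp_0\equiv -1$) we have the uniform lower bound $|\dot\vp_k(t)|\geq c>0$. Hence the identity
\[
e^{i\vp_k(t)/\ep} = \frac{\ep}{i\dot\vp_k(t)}\,\frac{d}{dt}e^{i\vp_k(t)/\ep}
\]
can be used inside the integrand. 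The key ingredients will be this identity, the decay estimate \eqref{decayqk} for $q_k$, the exponential decay \eqref{asymptz0} of $\dot x_0$ and $\ddot x_0$, and the bound on the cardinality of $\cC_k$.

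First I would treat the easy case $k=0$. Here $\vp_0(t)=-t$ and $\chi_{0,0}\equiv 1$, and a single integration by parts gives
\[
m_0 = i\ep\int_{-\infty}^{\infty} \bigl(\dot x_0(t)\,q_0(\dot x_0(t))\bigr)'\,e^{-it/\ep}\,dt,
\]
with vanishing boundary terms thanks to \eqref{asymptz0}. Using $\|q_0\|_{C^1}\leq CM$ and $\int|\ddot x_0|\,dt<\infty$, one concludes $|m_0|\leq CM\ep$.

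Next, for $k\neq 0$, I integrate by parts once, again with vanishing boundary contributions by \eqref{asymptz0}, to obtain
\[
m_{k,0} = -\frac{\ep}{i}\int_{-\infty}^{\infty} \frac{d}{dt}\!\left[\frac{\dot x_0(t)\,q_k(\dot x_0(t))\,\chi_{k,0}(t)}{\dot\vp_k(t)}\right]\!e^{i\vp_k(t)/\ep}\,dt.
\]
I would then split the derivative into three pieces and bound their $L^1$ norms separately. The piece where $d/dt$ hits the numerator uses $|q_k|\leq CM(1+|k|)^{-3}$ and $|q_k'|\leq CM(1+|k|)^{-3}(1+|k|)=CM(1+|k|)^{-2}$ together with the exponential decay of $\ddot x_0$, producing a bound of order $M(1+|k|)^{-2}$. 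The piece where $d/dt$ hits $\chi_{k,0}$ is supported on $O(N)$ intervals of length $O(\eta)$ and carries an extra factor $|\dot x_0|$, giving the smaller bound $M(1+|k|)^{-3}$. The remaining piece, $-F_k\chi_{k,0}\ddot\vp_k/\dot\vp_k^{\,2}$, is the potentially problematic one since $|\ddot\vp_k|=|k\,f(x_0(t))|=|k|\,|\ddot x_0(t)|$ can be as large as $|k|$; however this is compensated exactly by $|q_k|\leq CM(1+|k|)^{-3}$, and the remaining $|\dot x_0\,\ddot x_0|$ is integrable, again yielding a bound of order $M(1+|k|)^{-2}$. Combining the three pieces gives $|m_{k,0}|\leq CM\ep|k|^{-2}$.

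The main obstacle is the third piece: the factor $\ddot\vp_k$ grows linearly in~$|k|$, and naively one might fear losing the required decay. The plan hinges on carefully pairing this growth against the $|k|^{-3}$ Fourier decay of $q_k$ coming from the $C^3$ regularity hypothesis on~$q$; the $\dot x_0$ inside $F_k$ plus the exponential decay of $\ddot x_0$ then secure absolute integrability uniformly in $k$ and $\ep$, which is what makes the single integration by parts sufficient.
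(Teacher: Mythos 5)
Your proposal is correct and follows essentially the same route as the paper: a single integration by parts against the phase $e^{i\vp_k/\ep}$, justified by the uniform non-stationary-phase bound $|\dot\vp_k|>c$ from Lemma~\ref{L.cCk} on $\supp\chi_{k,0}$, with the resulting derivative split into the same pieces and each controlled by pairing the Fourier decay \eqref{decayqk} against the $|k|$ growth of $\ddot\vp_k=kf(x_0)$ and the exponential integrability \eqref{asymptz0}. The only cosmetic differences are that you group the terms slightly differently and leave $F_k$ undefined (it should be $\dot x_0\,q_k/\dot\vp_k$ or similar), but the estimates and the key compensation $|k|\cdot|k|^{-3}=|k|^{-2}$ match the paper's.
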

\begin{proof}
Let us begin with~$m_0$. As $\vp_0(t)=-t$ and $\ddot x_0=f(x_0)$, one can integrate by parts to show
\begin{align*}
|m_0|&= \bigg| \int_{-\infty}^\infty \dot x_0(t) \, q_0(\dot x_0(t))\,
       e^{-it/\ep}\, dt\bigg|\\
& =\ep\bigg| \int_{-\infty}^\infty e^{-it/\ep} \frac d{dt}\big( \dot
  x_0(t)\, q_0(\dot x_0(t))\big)\, dt\bigg|\\
&\leq \ep\int_{-\infty}^\infty |f(x_0(t))|\,\big({|} q_0(\dot x_0(t))|+ |\dot
  x_0(t)\, q_0'(\dot x_0(t))|\big)\, dt\\
&\leq CM\ep\,.
\end{align*}

Let us pass now to $m_{k,0}$ with $k\neq0$. Since
{$|\dot \vp_k(t)|>c$}
on the support
of~$\chi_{k,0}$ by Lemma~\ref{L.cCk}, one can use the identity
\[
e^{i\vp_k(t)/\ep}= -\frac{i\ep}{\dot\vp_k(t)} \frac d{dt} e^{i\vp_k(t)/\ep}
\]
and the exponential fall off~\eqref{asymptz0} to integrate by
parts, which permits to write the integral for~$m_{k,0}$ as
\begin{align*}
m_{k,0}= -i\ep \int_{-\infty}^\infty \frac d{dt}\bigg( \frac{\dot
  x_0(t)\, q_k({\dot x_0(t)} )\, \chi_{k,0}(t)}{\dot\vp_k(t)}\bigg)\,
  e^{i\vp_k(t)/\ep}\, dt\,.
\end{align*}
The derivative under the integral sign is
\[
\frac{\ddot x_0q_k\chi_{k,0}+{\dot x_0 \ddot x_0}
q_k'\chi_{k,0}+\dot
  x_0q_k\dot\chi_{k,0}}{\dot\vp_k} - \frac{\ddot\vp_k\dot x_0q_k\chi_{k,0}}{\dot\vp_k^2}\,,
\]
so we get the obvious bound
\begin{align*}
|m_{k,0}|\leq {} &
C\ep \frac{ \|q_k\|_{C^1(K_2)} \|\chi_{k,0}\|_{C^1(\RR)}}{c}
\int_{-\infty}^{\infty} (|\ddot x_0|+{|\dot x_0 \ddot x_0|}+|\dot x_0|)\, dt \\
& +
C\ep \frac{ |k|\|q_k\|_{C^0(K_2)} \|\chi_{k,0}\|_{C^0(\RR)}}{c^2}
\int_{-\infty}^{\infty} |\dot x_0|\, dt
\end{align*}
where we have employed that
$$
|\ddot\vp_k|=|k\ddot x_0|=|k f(x_0)|<C|k|
$$
and
$|\dot\vp_k|>c$ by~\eqref{unif}. Using now
the bound~\eqref{decayqk} for the decay of the Fourier coefficients, the lemma
follows.
\end{proof}

Let us now pass to estimate the numbers~$m_{k,j}$:

\begin{lemma}\label{L.mkj}
For all $j\in\cC_k$,
\[
m_{k,j}= \bigg(\frac{2\pi \ep}{|f(x_0(t^*_{k,j}))|}\bigg)^{\frac12} \frac{q_k(k^{-1})}{k|k|^{1/2}}
  e^{ikx_0(t^*_{k,j})-it^*_{k,j}+i\si_{k,j}\frac\pi
    4}+ \cR_{k,j}\,,
\]
where
\[
\si_{k,j}:=\frac{kf(x_0(t^*_{k,j}))}{|kf(x_0(t^*_{k,j}))|}
\]
is plus or minus one and
the error is bounded as
\[
|\cR_{k,j}|< CM\ep|k|^{-5/2}\,.
\]
\end{lemma}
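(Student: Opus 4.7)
The integral $m_{k,j}$ is a classical oscillatory integral: by Lemma~\ref{L.cCk} the phase $\vp_k(t)/\ep$ has a unique critical point in the support of $\chi_{k,j}$, namely $t^*_{k,j}$, and it is non-degenerate because $\ddot\vp_k(t^*_{k,j}) = kf(x_0(t^*_{k,j}))\neq 0$. The plan is therefore to apply the standard stationary phase method, noting that the amplitude $a_{k,j}(t):=\dot x_0(t)\,q_k(\dot x_0(t))\,\chi_{k,j}(t)$ at $t^*_{k,j}$ equals $q_k(k^{-1})/k$ (using $\dot x_0(t^*_{k,j})=1/k$ and $\chi_{k,j}(t^*_{k,j})=1$), while $|\ddot\vp_k(t^*_{k,j})|^{1/2} = |k|^{1/2}|f(x_0(t^*_{k,j}))|^{1/2}$; these combine into exactly the stated leading term, so the real work is to pin down the $k$-dependence of the error.

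Concretely, I would perform a Morse-type change of variable in a neighborhood of $t^*_{k,j}$: since $\vp_k\in C^3$ and the critical point is non-degenerate, there is a $C^2$ diffeomorphism $u = u_{k,j}(t)$ with $u_{k,j}(t^*_{k,j})=0$ and $u'_{k,j}(t^*_{k,j})=|\ddot\vp_k(t^*_{k,j})|^{1/2}$, putting the phase in the normal form $\vp_k(t)-\vp_k(t^*_{k,j}) = (\sigma_{k,j}/2)u^2$. This transforms the integral into
\[
m_{k,j} = e^{i\vp_k(t^*_{k,j})/\ep}\int b_{k,j}(u)\,e^{i\sigma_{k,j} u^2/(2\ep)}\,du,
\]
where $b_{k,j}(u) := a_{k,j}(t_{k,j}(u))/u'_{k,j}(t_{k,j}(u))$. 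Splitting $b_{k,j}(u) = b_{k,j}(0) + u\,G_{k,j}(u)$ with $G_{k,j}(u):=\int_0^1 b'_{k,j}(su)\,ds$, extending the $u$-integration to all of $\RR$ (the tails being negligible by non-stationary phase estimates based on the lower bound $|\dot\vp_k|>c$ outside $I_{k,j}$ from Lemma~\ref{L.cCk}), and using the Fresnel identity $\int e^{i\sigma u^2/(2\ep)}du = \sqrt{2\pi\ep}\,e^{i\sigma\pi/4}$, the $b_{k,j}(0)$ piece reproduces exactly the announced leading contribution.

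The remainder is $e^{i\vp_k(t^*_{k,j})/\ep}\int u\,G_{k,j}(u)\,e^{i\sigma_{k,j} u^2/(2\ep)}\,du$. Rewriting $u\,e^{i\sigma u^2/(2\ep)} = (\ep/i\sigma)(d/du)e^{i\sigma u^2/(2\ep)}$ and integrating by parts trades the factor $u$ for a factor of $\ep$, producing a bound of the form $|\cR_{k,j}| \leq C\ep\bigl(\|G'_{k,j}\|_{L^1} + \text{boundary/tail terms}\bigr)$.

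The hard part, and the genuine analytic content of the lemma, will be to verify that $\|G'_{k,j}\|_{L^1} \leq CM|k|^{-5/2}$ uniformly in $k$. This reduces to bounding $\|b_{k,j}\|_{C^2}$ on its support with sharp $k$-dependence, and the estimate must be carried out separately in the two regimes of Lemma~\ref{L.cCk}. In case (i), where $|t^*_{k,j}|\leq T_0$, one has $u'_{k,j}\sim |k|^{1/2}$ and the support of $b_{k,j}$ in $u$ has length $\sim |k|^{1/2}$; the compensating saving will come from the Fourier decay $\|q_k\|_{C^l}\leq CM|k|^{l-3}$ of \eqref{decayqk} applied to derivatives of $q_k\circ\dot x_0$. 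In case (ii), where $|t^*_{k,j}|>T_0$, $u'_{k,j}\sim 1$ and the support has bounded length, but now $\dot x_0$, $\ddot x_0$, $\dddot x_0$ are all of order $|k|^{-1}$ near $t^*_{k,j}$ by \eqref{x0t}, providing the required smallness. Tracking these factors through the formulas for $b_{k,j}$ and $b'_{k,j}$ in each case should yield the claimed bound.
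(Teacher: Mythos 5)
Your outline is in fact the same as the paper's: a Morse-type change of variables near the nondegenerate critical point, extraction of the value of the amplitude at the critical point, a Fresnel integral for the leading term (your leading term, with phase $e^{i\vp_k(t^*_{k,j})/\ep}$, matches what the paper's proof produces), and one integration by parts to convert the remainder into $\ep$ times an $L^1$ bound, with the two regimes $|t^*_{k,j}|\leq T_0$ and $|t^*_{k,j}|>T_0$ treated separately and the $k$-decay coming from \eqref{decayqk} and \eqref{x0t} respectively. The gap is in the one step you flag as ``the hard part'' and leave unverified: with your setup the claimed bound $\|G'_{k,j}\|_{L^1}\leq CM|k|^{-5/2}$ is not attainable in regime (i). You perform the change of variables on the whole support of $\chi_{k,j}$, an interval of fixed length $\sim\eta$, so the $u$-support of $b_{k,j}$ has length $\sim|k|^{1/2}$, and on most of it $\dot x_0$ and $\ddot x_0$ are of order one. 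Since $\|G'_{k,j}\|_{L^1}$ is essentially $\|b_{k,j}''\|_{L^1}$, the term in $b_{k,j}''$ in which both derivatives fall on $q_k(\dot x_0)$ is pointwise of size $\|q_k\|_{C^2}\,(\ddot x_0)^2\,(dt/du)^2\cdot |V\dot x_0|\sim M|k|^{-1}\cdot|k|^{-1}\cdot|k|^{-1/2}=M|k|^{-5/2}$, and integrating this over a $u$-interval of length $\sim|k|^{1/2}$ only gives $\|G'_{k,j}\|_{L^1}\lesssim M|k|^{-2}$. So your argument, carried out as described, yields $|\cR_{k,j}|\leq CM\ep|k|^{-2}$; this is still summable in $k$ (so Theorem~\ref{T.Melnikov} would survive), but it does not prove the lemma as stated.

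The missing device is a second, $k$-dependent localization. The paper inserts a cutoff $\chi^1_{k,j}$ equal to $1$ on $|t-t^*_{k,j}|<\tfrac12 c_0|k|^{-1/2}$ and supported in $|t-t^*_{k,j}|<c_0|k|^{-1/2}$, and applies the stationary-phase computation only there: after the change of variables the $\tau$-interval has length $O(1)$ and, in addition, $|\dot x_0|\lesssim|k|^{-1/2}$ on that window, which is exactly what produces the bounds \eqref{boundtau} and hence $|m^4_{k,j}|\leq CM\ep|k|^{-5/2}$. The intermediate annulus $c_0|k|^{-1/2}\lesssim|t-t^*_{k,j}|\leq 2\eta$ is then handled by a separate non-stationary-phase integration by parts using $|\dot\vp_k(t)|\geq c|k||t-t^*_{k,j}|$ from Lemma~\ref{L.cCk}, giving the smaller contribution $CM\ep|k|^{-3}\log(1+|k|)$. (In regime (ii) your single $O(1)$ window is fine, and the smallness from \eqref{x0t} works as you indicate.) To repair your proof you should either add this two-scale decomposition in regime (i), or be content with the weaker $|k|^{-2}$ error and note explicitly that it suffices for the theorem; as written, the decisive $k$-dependent estimates are only asserted, and the one quantitative claim you do make fails at the stated rate.
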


\begin{proof}
The proof is a little different depending on whether~$|t^*_{k,j}|$ is larger
than the uniform constant~$T_0$ introduced in Lemma~\ref{L.cCk} or
not:

\subsubsection*{First case: $|t^*_{k,j}|\leq T_0$.} Let us consider a
function~$\chi^1_{k,j}(t)$ that is equal to~1 if
$|t-t^*_{k,j}|<\frac12c_0|k|^{-1/2}$, vanishes identically for
$|t-t^*_{k,j}|>c_0|k|^{-1/2}$, and is bounded as
\begin{equation}\label{boundschi1}
\sup_{t\in\RR}\Big(|\chi_{k,j}^1(t)|+|k|^{-\frac12}|\dot\chi^1_{k,j}(t)|
+ |k|^{-1} |\ddot\chi_{k,j}^1(t)|\Big)<C\,,
\end{equation}
where $c_0<\eta$ and $C$ are uniform constants. We will also set
\[
\chi^2_{k,j}(t):=\chi_{k,j}(t)-\chi_{k,j}^1(t)\,.
\]
This allows us to write
\[
m_{k,j}=m_{k,j}^1+m_{k,j}^2\,,
\]
where we have set
\begin{align*}
m_{k,j}^l:=\int_{-\infty}^\infty \dot x_0(t)\,
          q_k(\dot x_0(t))\,e^{i\vp_k(t)/\ep}\, \chi_{k,j}^l(t)\,
           dt
\end{align*}
with $l=1,2$. Using an integration by parts
argument similar to that of Lemma~\ref{L.mk0}, one can readily write
\begin{align*}
m_{k,j}^2&=-i\ep \int_{-\infty}^\infty \frac{\dot
  x_0\chi_{k,j}^2q_k({\dot x_0(t)} )}{\dot\vp_k}\frac d{dt} e^{i\vp_k/\ep}\,
           dt\\
&=i\ep \int_{-\infty}^\infty \frac d{dt} \bigg( \frac{\dot
  x_0\chi_{k,j}^2q_k({\dot x_0(t)})}{\dot\vp_k}\bigg)\, e^{i\vp_k/\ep}\,
           dt\,.
\end{align*}
Hence one can utilize the estimates for~$\vp_k(t)$ presented in
{item~(i)} of Lemma~\ref{L.cCk} to bound this term as
\begin{align*}
|m_{k,j}^2|&\leq C\ep
             \big[\|q_k\|_{C^0(K_2)}|k|^{-\frac12}+\|q_k\|_{C^1(K_2)}|k|^{-1}\big]\int_{\frac{c_0}2|k|^{-\frac12}<|s|<{2}\eta}\frac{ds}s
  \\
&\qquad \qquad \qquad \qquad +C\ep
  \|q_k\|_{C^0(K_2)}|k|^{-1}\int_{\frac{c_0}2|k|^{-\frac12}<|s|<{2}\eta}\frac{ds}{s^2}\\
&\leq C\ep [\|q_k\|_{C^0(K_2)}|k|^{\frac12}+\|q_k\|_{C^1(K_2)}]\frac{\log(1+|k|)}{|k|}\,.
\end{align*}
To see why these integrals appear in the estimate, notice that the way
one bounds each of the terms in the above integral using
Lemma~\ref{L.cCk} (together with the bounds \eqref{boundschi1} and~\eqref{asymptz0}) is as follows:
\begin{align*}
\bigg|\int_{-\infty}^\infty \frac{\dot x_0\dot\chi_{k,j}^2
  q_k}{\dot\vp_k}\,  e^{i\vp_k/\ep}\,
           dt\bigg| &\leq
  C\|q_k\|_{C^0(K_2)} \|\dot\chi_{k,j}^2\|_{C^0}{|k|^{-1}} \int_{\frac{c_0}2 {|k|^{-\frac12}}<|t-t^*_{k,j}|<2\eta}
                     \frac{dt}{|t-t^*_{k,j}|}\\
&\leq C\|q_k\|_{C^0(K_2)}{|k|^{-\frac12}}\int_{\frac{c_0}2{|k|^{-\frac12}} <|s|<2\eta}\frac{ds}s\,.
\end{align*}
% where
% \[
% \cI:=\big\{t: |t-t^*_{k,j}|<c_0|k|^{-\frac12}\big\}\,.
% \]s
The bounds~\eqref{decayqk} for the decay of the Fourier coefficients
then ensure that
\[
|m_{k,j}^2|<CM \ep|k|^{-3}\log(1+|k|)\,,
\]

Let us now estimate~$m_{k,j}^1$. Using Taylor's formula
at~$t^*_{k,j}$ and the fact that
\[
f_{k,j}:=f(x_0(t^*_{k,j}))
\]
satisfies
{$|f_{k,j}|>c$}
by Lemma~\ref{L.cCk}
{(item~(i))},
one can write
\[
\vp_k(t)=\al_{k,j}+ h(t)\, (t-t^*_{k,j})^2\,,
\]
where $\al_{k,j}:= \vp_k(t^*_{k,j})$,  $h(t^*_{k,j})=\frac12kf_{k,j}$ and
\begin{align*}
\frac{|h(t)-\frac12kf_{k,j}|}{|t-t^*_{k,j}|}&\leq
{\sup_{|\bar t-t^*_{k,j}|<  c_0 |k|^{-\frac12}} |\dddot\vp_k(\bar t)|} \\
&\leq
{\sup_{|\bar t-t^*_{k,j}|<  c_0 |k|^{-\frac12}} |kf'(x_0(\bar t))\dot x_0(\bar t)|}\\
&\leq C\|f'\|_{C^0(K_1)}|k|
\end{align*}
for $|t-t^*_{k,j}|<c_0|k|^{-1/2}$.
Likewise,
\begin{equation}\label{boundsh}
|\dot h(t)|+|\ddot h(t)|+|\dddot h(t)|\leq C|k|
\end{equation}
for all~$|t-t^*_{k,j}|< c_0|k|^{-\frac12}$, with $C$ a constant that
depends on $\|f\|_{C^3(K_1)}$. In particular, if the constant~$c_0$ is small enough (but independent of~$k$
and~$\ep$), one can define a real function~$\tau=\tau(t)$ as
\[
\tau:=|h(t)|^{\frac12}(t-t^*_{k,j})\,,
\]
with
\begin{equation}\label{newcI}
|t-t^*_{k,j}|<c_0|k|^{-1/2}\,.
\end{equation}
In terms of this variable, one can write
\[
\vp_k= \al_{k,j}+\si_{k,j} \tau^2\,,
\]
where the multiplicative factor $\si_{k,j}:= kf_{k,j}/|kf_{k,j}|$ is plus or
minus one. Moreover, the interval~\eqref{newcI}
can be described in terms
of the new variable as $\tau\in\cI$, where the new interval satisfies
\[
\cI\subset\{|\tau|< c_1\}
\]
with $c_1$ a constant independent of~$\ep$ and~$k$.

It is easy to see that both $\dot\tau$ and
\[
V:=\frac1{|\dot\tau|}=
{\frac{2|h|^{3/2}}{2|h|^2+\dot h h (t-t^*_{k,j})}}
\]
do not vanish in the interval $|t-t^*_{k,j}|<c_0|k|^{-\frac12}$
if~$c_0$ is small enough (independently of~$k$ and~$\ep$), and that
\begin{equation}\label{Vfkj}
V|_{t=t^*_{k,j}}=2^{1/2}|kf_{k,j}|^{-1/2}\,.
\end{equation}
In fact,
using that
\[
\frac{dV}{d\tau} =\frac{\dot V}{\dot\tau}
\]
and an analogous formula for the second derivative, it is easy to see
that in the region $|t-t^*_{k,j}|<c_0|k|^{-1/2}$ the function~$V$ is
bounded as
\begin{equation}\label{boundsV}
|V|<C|k|^{-\frac12}\,,\qquad \bigg|\frac {dV}{d\tau}\bigg|<C|k|^{-1}\,,\qquad \bigg|\frac {d^2V}{d\tau^2}\bigg|<C|k|^{-\frac32}\,.
\end{equation}
Again, $C$ does not depend on~$k$ or~$\ep$.

Still denoting by $\dot x_0$, $\chi_{k,j}^1$ and $q_k({\dot x_0})$ the expression of these
functions in terms of the new variable~$\tau$, with some abuse of
notation, notice that
\begin{equation}\label{boundtau}
\bigg|\frac{ d\chi_{k,j}^1}{d\tau}\bigg| + \frac{|k|^{5/2}}M \bigg|\frac{
  dq_k({\dot x_0})}{d\tau}\bigg|+
{|k|^{1/2}}
\bigg|\frac{ d\dot x_0}{d\tau}\bigg|+ \bigg|\frac{ d^2\chi_{k,j}^1}{d\tau^2}\bigg| + \frac{k^2}M\bigg|\frac{
  d^2q_k({\dot x_0})}{d\tau^2}\bigg|+
{|k|}
  \bigg|\frac{ d^2\dot x_0}{d\tau^2}\bigg|<C
\end{equation}
with a constant independent of~$\ep$ or~$k$. This follows from the
bounds~\eqref{decayqk}--\eqref{boundsh} after writing the derivatives with respect
to~$\tau$ in terms of derivatives with respect to~$t$. The integral for~$m_{k,j}^1$ can
therefore be written as
\begin{align*}
m_{k,j}^1&= e^{i\al_{k,j}/\ep}\int_{-c_1}^{c_1} V\dot x_0\chi_{k,j}^1
           q_k({\dot x_0})\, e^{i\si_{k,j}\tau^2/\ep}\, d\tau\,.
\end{align*}
Notice that~$V$ appears here as the Jacobian of the change of
variables.

To evaluate this integral, let us decompose it as
\[
m_{k,j}^1=e^{i\al_{k,j}/\ep} (m_{k,j}^3+ m_{k,j}^4)\,,
\]
where
\begin{align*}
m_{k,j}^3&= [V\dot x_0\chi_{k,j}^1
           q_k({\dot x_0})]|_{\tau=0} \int_{-c_1}^{c_1}
           e^{i\si_{k,j}\tau^2/\ep}\, d\tau= \frac{2^{1/2}q_k({\dot x_0}(t^*_{k,j}))}{k|k|^{1/2} |f_{k,j}|^{1/2}}\int_{-c_1}^{c_1}
           e^{i\si_{k,j}\tau^2/\ep}\, d\tau\,,\\
m_{k,j}^4&= \int_{-c_1}^{c_1} \big[V\dot x_0\chi_{k,j}^1
           q_k({\dot x_0})-[V\dot x_0\chi_{k,j}^1
           q_k({\dot x_0})]|_{\tau=0}\big]\, e^{i\si_{k,j}\tau^2/\ep}\, d\tau\,.
\end{align*}
It is clear that the first integral can be evaluated explicitly modulo
higher order corrections using Fresnel integrals:
\begin{align}\notag
\int_{-c_1}^{c_1} e^{i\si_{k,j}\tau^2/\ep}\, d\tau&=
\ep^{\frac12}\int_{-\infty}^\infty
e^{i\si_{k,j}\tau^2}d\tau-\ep^{\frac12}\int_{|\tau|>c_1/\sqrt\ep}e^{i\si_{k,j}\tau^2}d\tau\\
&=
({\pi\ep})^{\frac12}e^{i\si_{k,j}\pi/4}+ \cR_2\,,\label{inte}
\end{align}
with $|\cR_2|<C\ep$. To estimate~$m^4_{k,j}$, we use Taylor's
formula to write
\[
V\dot x_0\chi_{k,j}^1
           q_k(\dot x_0)-[V\dot x_0\chi_{k,j}^1
           q_k(\dot x_0)]|_{\tau=0} =: \tau \, F(\tau)\,,
\]
where the bounds~\eqref{boundsV}-\eqref{boundtau} ensure that
\[
|k|^3|F(\tau)|+ |k|^{5/2}|F'(\tau)|<CM
\]
in the domain of integration. It then follows that
\begin{align*}
|m_{k,j}^4|& = \bigg|\int_{-c_1}^{c_1} \tau F(\tau)
             e^{i\si_{k,j}\tau^2/\ep}\, d\tau\bigg|\\
&=\frac\ep2\bigg|\int_{-c_1}^{c_1} F(\tau)
             \frac d{d\tau} e^{i\si_{k,j}\tau^2/\ep}\, d\tau\bigg|\\
&=\frac\ep2\bigg|\int_{-c_1}^{c_1} F'(\tau)
             e^{i\si_{k,j}\tau^2/\ep}\, d\tau\bigg|\\
&\leq \frac\ep2 \int_{-c_1}^{c_1} |F'(\tau)|\,d\tau\\
&\leq CM\ep|k|^{-5/2} \,.
\end{align*}
The expression for $m_{k,j}$ in the lemma then follows  upon {recalling that $\dot x_0(t_{k,j}^*)=1/k$}.

\subsubsection*{Second case: $|t^*_{k,j}|> T_0$.} In this case we take
a small constant~$c_0<\eta$ independent of~$\ep$ and~$k$ and define a
function~$\chi^1_{k,j}(t)$ that is equal to~1 if
$|t-t^*_{k,j}|<c_0/2$, vanishes identically for
$|t-t^*_{k,j}|>c_0$, and is bounded as
\begin{equation}\label{boundschi1bis}
\sup_{t\in\RR}\Big(|\chi_{k,j}^1(t)|+|\dot\chi^1_{k,j}(t)|
{+|\ddot\chi^1_{k,j}(t)|}
\Big)<C\,,
\end{equation}
where $C$ is a uniform constant. We set
\[
\chi^2_{k,j}(t):=\chi_{k,j}(t)-\chi_{k,j}^1(t)\,,
\]
which allows us to write
\[
m_{k,j}=m_{k,j}^1+m_{k,j}^2
\]
with
\begin{align*}
m_{k,j}^l:=\int_{-\infty}^\infty \dot x_0(t)\,
          q_k(\dot x_0(t))\,e^{i\vp_k(t)/\ep}\, \chi_{k,j}^l(t)\,
           dt
\end{align*}
and $l=1,2$.

We argue as in the first case to write
\begin{align*}
m_{k,j}^2&=-i\ep \int_{-\infty}^\infty \frac{\dot
  x_0\chi_{k,j}^2q_k(\dot x_0(t) )}{\dot\vp_k}\frac d{dt} e^{i\vp_k/\ep}\,
           dt\\
&=i\ep \int_{-\infty}^\infty \frac d{dt} \bigg( \frac{\dot
  x_0\chi_{k,j}^2q_k(\dot x_0(t))}{\dot\vp_k}\bigg)\, e^{i\vp_k/\ep}\,
           dt\,.
\end{align*}
The bound~\eqref{x0t} together with the estimates for~$\vp_k(t)$
presented in
item~(ii)
 of Lemma~\ref{L.cCk} then permit to estimate
$m^2_{k,j}$ using the same argument as in the first case:
\begin{align*}
|m_{k,j}^2|&\leq \frac{C\ep}{|k|}
             \bigg[\|q_k\|_{C^1(K_2)}\int_{\frac{c_0}2<|s|<{2}\eta}\frac{ds}s +
             \|q_k\|_{C^0(K_2)}\int_{\frac{c_0}2<|s|<{2}\eta}\frac{ds}{s^2}\bigg]\\
&\leq \frac{C\ep}{|k|}\|q_k\|_{C^1(K_2)}\,.
\end{align*}

Let us now compute the remaining integral, $m_{k,j}^1$. Again using Taylor's formula
at~$t^*_{k,j}$ and the fact that the absolute value of
\[
f_{k,j}:=f(x_0(t^*_{k,j}))=\ddot x_0(t^*_{k,j})
\]
is bigger than $c/|k|$ by Lemma~\ref{L.cCk}
{(item~(ii))},
one can write
\[
\vp_k(t)=\al_{k,j}+ h(t)\, (t-t^*_{k,j})^2\,,
\]
where $\al_{k,j}:= \vp_k(t^*_{k,j})$,
{$h(t^*_{k,j})=\frac12 kf_{k,j}$}
and
\begin{equation*}%\label{boundsh}
{\sup_{|t-t^*_{k,j}|<c_0}\big[|\dot h (t)|+|\ddot h(t)|+|\dddot h(t)|\big]\leq C\|f\|_{C^3(K_1)}}
\end{equation*}
just as in Equation~\eqref{x0t}. Notice that, in contrast to the first case, no powers of~$k$ appear in the
bounds. Hence, if the uniform constant~$c_0$ is small enough, one can define a real function~$\tau=\tau(t)$ as
\[
\tau:=|h(t)|^{\frac12}(t-t^*_{k,j})\,,
\]
with $|t-t^*_{k,j}|<c_0$, and one can then write
\[
\vp_k= \al_{k,j}+\si_{k,j} \tau^2\,,
\]
where the multiplicative factor $\si_{k,j}:= kf_{k,j}/|kf_{k,j}|$ is plus or
minus one. Moreover, the interval $|t-t^*_{k,j}|<c_0$ can be described in terms
of the new variable as $\tau\in\cI$, where the new interval satisfies
\[
\cI\subset\{|\tau|< c_1\}
\]
with $c_1$ a constant independent of~$\ep$ and~$k$. Analogously to the
first case, one can show that the Jacobian of this change of variables
is
\[
V:=
{\frac{2|h|^{3/2}}{2h^2+\dot h h (t-t^*_{k,j})}}\,,
\]
which does not vanish in the interval $|t-t^*_{k,j}|<c_0$
for a small enough uniform constant~$c_0$. Notice that~$V$ is well
defined because the denominator does not vanish in the above interval. Its value at $t^*_{k,j}$ is
\begin{equation*}%\label{Vfkj}
V|_{t=t^*_{k,j}}=2^{1/2}|kf_{k,j}|^{-1/2}\,.
\end{equation*}
Notice that the fact that $f_{k,j}=\ddot x_0(t^*_{k,j})$ and the
asymptotics~\eqref{x0t} show that, contrary to what happened in the
first case, here $V$~is uniformly bounded in the interval under
consideration in the sense that
\[
\frac1C<|V|<C\,,\qquad \bigg|\frac{dV}{d\tau}\bigg| + \bigg|\frac{d^2V}{d\tau^2}\bigg|<C\,.
\]
{The estimates in \eqref{boundtau} are then replaced by
\[
\bigg|\frac{ d\chi_{k,j}^1}{d\tau}\bigg| + \frac{|k|^{3}}M \bigg|\frac{
  dq_k({\dot x_0})}{d\tau}\bigg|+
{|k|}
\bigg|\frac{ d\dot x_0}{d\tau}\bigg|+ \bigg|\frac{ d^2\chi_{k,j}^1}{d\tau^2}\bigg| + \frac{|k|^3}M\bigg|\frac{
  d^2q_k({\dot x_0})}{d\tau^2}\bigg|+
{|k|}
  \bigg|\frac{ d^2\dot x_0}{d\tau^2}\bigg|<C
\]
with a constant independent of~$\ep$ or~$k$.}

To compute~$m_{k,j}^1$, we can use the same trick, obtaining
completely analogous formulas. Specifically, just as in the first case
one can decompose
\[
m_{k,j}^1=e^{i\al_{k,j}/\ep} (m_{k,j}^3+ m_{k,j}^4)\,,
\]
where
\begin{align*}
m_{k,j}^3&= [V\dot x_0\chi_{k,j}^1
           q_k(\dot x_0(t))]|_{\tau=0} \int_{-c_1}^{c_1}
           e^{i\si_{k,j}\tau^2/\ep}\, d\tau= \frac{2^{1/2}q_k(\dot x_0 (t^*_{k,j}))}{k|k|^{1/2} |f_{k,j}|^{1/2}}\int_{-c_1}^{c_1}
           e^{i\si_{k,j}\tau^2/\ep}\, d\tau\,,\\
m_{k,j}^4&= \int_{-c_1}^{c_1} \big[V\dot x_0\chi_{k,j}^1
           q_k(\dot x_0)-[V\dot x_0\chi_{k,j}^1
           q_k(\dot x_0)]|_{\tau=0}\big]\, e^{i\si_{k,j}\tau^2/\ep}\, d\tau\,.
\end{align*}
The first integral can be evaluated explicitly modulo
higher order corrections using the formula~\eqref{inte}
{and recalling that $\dot x_0(t_{k,j}^*)=1/k$}, obtaining the
same formula as in the first case. The second integral can be shown to be small upon using Taylor's
formula to write
\[
[V\dot x_0\chi_{k,j}^1
           q_k(\dot x_0)-[V\dot x_0\chi_{k,j}^1
           q_k(\dot x_0)]|_{\tau=0} =: \tau \, F(\tau)
\]
with
\[
k^4\big(|F(\tau)|+ |F'(\tau)|\big)<CM
\]
in the domain of integration. Arguing as in the first case one then finds
\begin{align*}
|m_{k,j}^4| \leq CM\ep
{k^{-4}}
\,,
\end{align*}
and the lemma follows.
\end{proof}

\subsubsection*{Step 4: Formula for the displacement function}

To
complete the proof of the theorem, we just need to combine the
relation between the displacement and Melnikov functions, captured in
Proposition~\ref{P.Melnikov}, the formula for the Melnikov function in
terms of the numbers~$m_k$ and an error (Equations~\eqref{error1}--\eqref{mks}), and the expressions for~$m_{k,j}$
computed in Lemmas~\ref{L.mk0}-\ref{L.mkj}.
Using the notation
$$
\si^*:=kf(x_0(t^*))/|kf(x_0(t^*))|\,,
$$
this immediately yields
\begin{multline*}
\cD(t_0)=\ep^r\sum_{k\in\ZZ\backslash\{0\}
}\Real \bigg[ \sum_{t^* \in \cC_k} \bigg(\frac{2\pi \ep}{|f(x_0(t^*))|}\bigg)^{\frac12} \frac{q_k(k^{-1})}{k|k|^{1/2}}
\,
  e^{ikx_0(t^*)-it^*+i\si^*\frac\pi4 -i\frac{t_0}{\ep}}\bigg]\\
+O(\ep^{r+1}+\ep^{2r-2})\,,
\end{multline*}
which amounts to the formula provided in the statement of
Theorem~\ref{T.Melnikov}.

\section{An example: Dynamics in a rapidly oscillating electromagnetic
  field}
\label{S.ex}

A context where rapid spacetime oscillations appear naturally is in
dynamical systems subject to a perturbation controlled by a wave equation. Roughly
speaking, the reason is that the wave equation
\begin{equation}\label{wave}
\frac{\pd^2 f}{\pd t^2}= \frac{\pd^2 f}{\pd x^2}
\end{equation}
implies that the time frequencies and the space frequencies are equal,
which corresponds to the situation modeled by the
perturbation~\eqref{g} that we have considered in this paper. A prime
example of physical phenomena controlled by the wave equation are
electromagnetic fields, which are described by the Maxwell equations
in vacuum:
\begin{gather*}
\frac{\partial{E}}{\partial t}  =  \curl{B}\,,\qquad  \Div{E}=0\,,   \\
\frac{\partial{B}}{\partial t}   =  -\curl{E}\,,\qquad   \Div B=0\,.
\end{gather*}
The three-dimensional time-dependent vector fields $E(x,y,z,t)$
and $B(x,y,z,t)$ are the electric and magnetic field, respectively. For the benefit of the reader we will next
present a simple physical model where one can encounter this kind of
rapid spacetime oscillations.

Consider as the unperturbed system a charged particle moving in the
electric field generated by a harmonic time-independent potential, such as
\[
E_0:= -\nabla V\,,\qquad V(x,y,z):= \cos x \cosh z\,.
\]
With $X:=(x,y,z)$, the equations of motion are then $\ddot X=-\nabla
V$, that is,
\begin{align*}
\ddot x&= \sin x\cosh z\,,\qquad \ddot y=0\,,\qquad \ddot z=-\cos
         x\sinh z\,.
\end{align*}
The subset of the phase space
\begin{equation}\label{invset}
\{ (X,\dot X)\in\RR^6: y=\dot y=z=\dot z=0\}
\end{equation}
is obviously invariant, and the equation of motion there is just that
of a pendulum:
\[
\ddot x= \sin x\,.
\]

Let us now perturb this system by adding a small but rapidly oscillating
electromagnetic field of the form
\[
B_1:= \frac {\pd f(x,t)}{\pd t}\, (0,-z,y)\,, \qquad E_1:= \bigg(2f(x,t),-\frac {\pd
  f(x,t)}{\pd x}\,y, -\frac {\pd
  f(x,t)}{\pd x}\,z\bigg)\,.
\]
A straightforward computation shows that the electromagnetic fields
that we have considered satisfy the Maxwell equations if and only
if~$f(x,t)$ satisfies the wave equation~\eqref{wave}. A simple choice
for~$f$ is to set
\begin{equation}\label{f}
f(x,t):=\ep^r\sin\frac x\ep\cos\frac t\ep
\end{equation}
with $r>\frac52$. The perturbed
equations of motion in the total electromagnetic field,
\[
\ddot X= E_0+E_1+ \dot X\times B_1\,,
\]
can then be written as
\begin{align*}
\ddot x&= \sin x\cosh z- (y\dot y+z\dot z)\frac{\pd f(x,t)}{\pd t}+2f(x,t)\,,\\
 \ddot y&=-y\bigg(\frac{f(x,t)}{\pd x}-\dot x\frac{\pd f(x,t)}{\pd
          t}\bigg)\,,\\
 \ddot z&=-\cos x\sinh z-z\bigg(\dot x\frac{\pd f(x,t)}{\pd t}
  +\frac{\pd f(x,t)}{\pd x}\bigg)\,.
\end{align*}

Notice that the set~\eqref{invset} is invariant also for the perturbed
system for any choice of the function~$f(x,t)$. The equations of
motion on this subset reduce to the one and a half degrees of freedom
system
\[
\ddot x= \sin x+2f(x,t)\,,
\]
which with~$f(x,t)$ as in~\eqref{f} (or with many other rapidly oscillating
solutions of the wave equation~\eqref{wave}) is exactly of the form
studied in Theorem~\ref{T.Melnikov}. In this particular case, an easy
computation shows that the displacement function reads as
\[
\cD(t_0)=\ep^{r+\frac12}\left(\frac{16 \pi }{\sqrt{3}}\right)^{\frac 12}
\cos(\tfrac{13\pi}{12} +\log (2+\sqrt{3}))\sin\frac{t_0}\ep + O(\ep^{r+1}+\ep^{{2r-2}})\,.
\]
Now, by Corollary~\ref{C.splitting}, the perturbed
system with~$f$ given by~\eqref{f} is chaotic in the sense that it has positive topological
entropy for all small enough~$\ep>0$.

\section{Quasi-periodic perturbations}
\label{S.qp}

In this section we will study rapidly oscillating perturbations
that are not time-periodic, but quasiperiodic. While the periodic
case is a model for one and a half degrees of freedom systems,
the quasiperiodic case is a model for higher dimensional systems.
This model allows us to understand the
splitting of invariant manifolds of whiskered tori with~$n$
frequencies in a nearly integrable Hamiltonian system, and
hence it is relevant to the study of the mechanisms of diffusion.
Arguing as in Equation~\eqref{stab}, the corresponding a priori stable
problems can be understood in terms of perturbations of the form
\[
g(x,\dot x, t; \ep) := G(x,\ep \dot x) F \left(\frac{\omega t}{\ep} \right)\,,
\]
where $\omega \in \RR^n$ is a non-resonant frequency vector and
$F: \TT^n \rightarrow \RR$ is a periodic function of all its arguments.

As is well known, the
{quasiperiodic}
case is not well understood and
its analysis is usually rather involved. In fact,
the literature on this matter is rather scarce.
In general~\cite{DGJS97,Sau01,LMS03,Viejo9, DGG16}, one has only been able to prove the splitting of separatrices for very concrete systems (usually the quasiperiodically forced pendulum) and frequency vectors with very concrete arithmetic
properties (always in very low dimensions, with $n=2$ or~3, and
frequencies usually given by the golden ratio). Needless to say, all
the systems that have been considered are analytic.

% (beyond Diophantine
% conditions) of the frequencies
% are required to obtain (exponentially
% small) asymptotic estimates of the splitting of the manifolds.
% This was proved
% in~\cite{DGJS97} for the quasiperiodically forced pendulum,
% in the case
% $G(x):=\sin x$. Other interesting
% studies
% are \cite{Sau01,LMS03,Viejo9}, where exponentially
% small estimates for the transversality of the splitting were obtained, excluding some intervals
% of the perturbation parameter $\ep$. In all these works, the arguments
% work just for very low dimensions ($n=2$ or~3) and a few concrete values of the frequency vector (typically
% the golden ratio) as very specific properties of the continued
% fraction expansion of the frequency are crucially
% employed (see~\cite{DGG16} and references therein). Furthermore, these results make strong use of the
% facts that $G(x)=\sin x$ and assume that  $F(\theta)$ is an analytic
% function whose Fourier coefficients are all nonzero. This allows to characterize
% the dominant harmonics of the Melnikov potential in terms of the arithmetic
% properties of the frequency vector.

What we will show in this section is that, when the
{quasiperiodic}
perturbation features
fast oscillations in the space variable, the study of the
Melnikov function can be addressed using
the tools developed in Section~\ref{S.proof}. This permits to prove the splitting of separatrices
in fairly general contexts (provided, of course, that these
fast oscillations appear).
For concreteness we
will consider perturbations of the form
\begin{equation}\label{g-qp}
g (x,\dot x,t; \ep) := q\bigg(\frac x\ep,\dot x;\ep\bigg)
F\Big(\frac{\omega t}\ep\Big)\,,
\end{equation}
where $\omega \in \RR^n$ is a non-resonant frequency vector
and $F : \TT^n \rightarrow \RR$ is a trigonometric polynomial of zero mean,
which we write as
\[
F(\te)=:\sum_{m\in \cZ} F_m\, e^{im\cdot\te}
\]
with harmonics in a finite subset $\cZ \subset \ZZ^n \backslash\{0\}$.

\begin{theorem}\label{T.qp}
Let us consider the system~\eqref{eqx} with $r>\frac52$ and a
perturbation of the form~\eqref{g-qp}, where the functions~$f$ and~$g$
are of class~$C^3$. Suppose that the unperturbed system ($\ep=0$) has a hyperbolic equilibrium
with a homoclinic connection corresponding to a stable manifold and
an unstable manifold, which we describe through
an integral curve $x_0(t)$. Setting for each nonzero~$k\in \ZZ$ and each~$m\in\cZ$
\[
\cC_{k,m}:=\left\{ t^*\in\RR: \dot x_0(t^*) = - \frac{m \cdot \omega}{k} \right\}\,,
\]
assume moreover that $|f(x_0(t^*))|\neq0$ for all $t^*\in \cC_{k,m}$, $k\in\ZZ\backslash\{0\}$ and $m \in \cZ$.
Then the displacement function is
\[
\cD(t_0)=\ep^{r+\frac12}\sum_{m\in\cZ}\Big(
\cA_m \cos \frac{m \cdot \omega  t_0}{\ep}
+
\cB_m \sin \frac{m \cdot \omega  t_0}{\ep}
\Big) + O(\ep^{r+1}+\ep^{{2r-2}})\,,
\]
where $\cA_m$ and $\cB_m$ are the real constants
\begin{align*}
\cA_m&:= \sqrt{2\pi} \Real\left[F_m \sum_{
k\in\ZZ\backslash\{0\}
} \sum_{t^*\in\cC_k} \frac{ q_k(k^{-1})\, e^{i(k x_0(t^*)-t^*+\sigma^* \pi/4)}}{k|k|^{1/2}|f(x_0(t^*))|^{1/2}}\right] \,,\\
\cB_m&:= \sqrt{2\pi} \Imag\left[F_m \sum_{
k\in\ZZ\backslash\{0\}
} \sum_{t^*\in\cC_k} \frac{ q_k(k^{-1})\, e^{i(k x_0(t^*)-t^*+\sigma^* \pi/4)}}{k|k|^{1/2}|f(x_0(t^*))|^{1/2}}\right]
\end{align*}
and the cardinality of~$\cC_{k,m}$ is uniformly
bounded in~$k$.
\end{theorem}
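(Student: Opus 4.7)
The plan is to follow the four-step proof of Theorem~\ref{T.Melnikov} essentially verbatim, the only structural change being that the single oscillating factor $\cos(t/\ep)$ of the periodic case is replaced by the finite Fourier sum $F(\omega t/\ep) = \sum_{m\in\cZ} F_m\, e^{im\cdot\omega t/\ep}$. Accordingly, I expect each harmonic $m\in\cZ$ to contribute its own stationary-phase expression, and the final displacement to be the finite sum announced in the statement.

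First I would re-verify Proposition~\ref{P.Melnikov}. Since $\cZ$ is finite, $F$ is a trigonometric polynomial whose sup-norm is bounded independently of~$\ep$, so the estimate $\|\ep^r g(\cdot,\cdot,\cdot;\ep)\|_{C^1(K\times\RR)}\leq C\ep^{r-1}$ of~\eqref{C1g} continues to hold, the invariant manifold theorem and the cross-product computation of Section~\ref{S.proof} go through unchanged, and one still obtains $|\cD(t_0)-\cM(t_0)|\leq C\ep^{2r-2}$. Next, replacing $q(\cdot,\cdot;\ep)$ by $q(\cdot,\cdot;0)$ costs at most $O(\ep^{r+1})$ as in~\eqref{cR1}, and expanding both Fourier series converts the Melnikov integral into
\[
\tcM(t_0) = \ep^r \sum_{m\in\cZ} F_m\, e^{im\cdot\omega t_0/\ep}\, \sum_{k\in\ZZ} m_{k,m}\,,
\]
where $m_{k,m} := \int_{-\infty}^{\infty} \dot x_0(t)\, q_k(\dot x_0(t))\, e^{i\vp_{k,m}(t)/\ep}\, dt$ and the new phase function is $\vp_{k,m}(t) := k x_0(t) + m\cdot\omega\, t$.

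The heart of the argument is then the stationary phase analysis of $m_{k,m}$. Critical points satisfy $k\dot x_0(t^*) + m\cdot\omega = 0$, so they are precisely the points of $\cC_{k,m}$, and $\ddot\vp_{k,m}(t^*) = k f(x_0(t^*)) \neq 0$ by hypothesis. I would reprove the analogues of Lemmas~\ref{L.cCk}, \ref{L.mk0} and~\ref{L.mkj} for this phase by running the same arguments with $1/k$ replaced throughout by $-m\cdot\omega/k$. Since $\cZ$ is finite, $|m\cdot\omega|$ is uniformly bounded, so the implicit function theorem and limit-equation arguments of Lemma~\ref{L.cCk} still give a uniform bound on the cardinality of $\cC_{k,m}$; in the far-in-time regime the critical points satisfy an asymptotic of the form $t^*_{k,m,j} \sim \la^{-1}\log(c_\pm\la|k/(m\cdot\omega)|)$, preserving every uniform estimate in item~(ii) of Lemma~\ref{L.cCk}. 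The integration-by-parts and stationary phase estimates of Lemmas~\ref{L.mk0}--\ref{L.mkj} then yield, at each $t^*\in\cC_{k,m}$, an expression of the form
\[
m_{k,m} = \Big(\frac{2\pi\ep}{|f(x_0(t^*))|}\Big)^{\!1/2} \frac{q_k(-m\cdot\omega/k)}{k|k|^{1/2}}\, e^{i\vp_{k,m}(t^*)/\ep + i\sigma^*\pi/4} + O(\ep|k|^{-5/2})\,,
\]
after which summing over $k\in\ZZ\backslash\{0\}$ and over the finite set $\cZ$, and separating the real and imaginary parts of $F_m\, e^{im\cdot\omega t_0/\ep}$, assembles the displayed formula for $\cD(t_0)$.

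The step I expect to be the main obstacle is verifying that all constants in the analogues of Lemmas~\ref{L.cCk}--\ref{L.mkj} remain uniform in \emph{both} $k$ and $m$. Because $\cZ$ is finite this is ultimately a bookkeeping exercise---tracking how each constant depends on $|m\cdot\omega|$ and exploiting that this quantity ranges over a finite set---but it is where the proof is most delicate, especially in the transition between the ``small $|t^*|$'' and ``large $|t^*|$'' regimes of Lemma~\ref{L.cCk}, where the threshold $T_0$ may a priori depend on $m$. Once such $m$-uniformity is secured, the remaining stationary phase work is identical to the periodic case and poses no new difficulty.
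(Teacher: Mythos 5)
Your proposal follows the paper's own proof essentially verbatim: reuse the four steps of Theorem~\ref{T.Melnikov} with the single factor $\cos(t/\ep)$ replaced by the finite Fourier sum $\sum_{m\in\cZ}F_m e^{im\cdot\omega t/\ep}$, work harmonic by harmonic with the phase $\vp_{k,m}(t)=kx_0(t)+m\cdot\omega t$, and exploit the finiteness of $\cZ$ together with the non-resonance of $\omega$ to keep the constants in Lemmas~\ref{L.cCk}--\ref{L.mkj} uniform in $m$, which is exactly the point of delicacy you identify. As a side remark, your stationary-phase computation correctly produces $q_k$ evaluated at $\dot x_0(t^*)=-m\cdot\omega/k$ with the sum running over $\cC_{k,m}$, which indicates that the theorem's displayed constants $\cA_m,\cB_m$---which still read $q_k(k^{-1})$ and $\cC_k$---carry over a slip from the periodic case that your derivation implicitly corrects.
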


% \begin{remark}
% It is easy to see that it suffices to consider the sum over~$k$ with
% $|k|\geq {|m\cdot \omega|}/{\max |\dot x_0|}$.
% \end{remark}

%\begin{remark}
%\textcolor{blue}{
%The tools presented in this paper can be readily extended to consider
%the so-called quasiperiodic case. Think for example in the perturbation
%\begin{equation}\label{g-qp2}
%g (x,\dot x,t; \ep) = q\bigg(\frac x\ep,\dot x;\ep\bigg)
%F(\omega_1 t, \omega_2 t, \ldots \omega_n t)\,,
%\end{equation}
%where $\omega \in \RR^n$ is a non-resonant frequency vector
%and $F : \TT^d \rightarrow \RR$. In the classical setting (where the
%variable $x$ does not oscillate fast), this problem is much more
%difficult to study than the periodic case (referencias), due to ...
%However, in the setting discussed in this paper, the quasiperiodic
%case can be considered with minor modifications. For example,
%if we consider
%\[
%\omega=(1/\ep,\gamma/\ep)\,, \qquad F(\theta_1,\theta_2)=\sin \theta_1 \sin \theta_2
%\]
%then the problem reduces to study the phase function
%\[
%\vp_k(t)=x_0(t)-\frac{1+\gamma}{k}
%\]
%bla, bla, bla.}
%\end{remark}
%

The proof of Theorem~\ref{T.qp} goes just as that of
Theorem~\ref{T.Melnikov}, mutatis mutandis.
One starts by introducing a new phase function
\[
\vp_{k,m}(t) := k  x_0(t)+ m\cdot \omega t
\]
that takes into account the
{quasiperiodic}
dependence (simply because
it depends on~$m\in\cZ$).
We then observe that Proposition~\ref{P.Melnikov} holds true when the perturbation is quasiperiodic.
The Melnikov function now
reads as
\[
\cM(t_0) = \ep^r \int_{-\infty}^\infty
\dot x_0(t) q\left( \frac{x_0(t)}{\ep},\dot x_0(t);\ep \right)
F\left(\frac{\omega (t+t_0)}{\ep}\right) dt\,,
\]
which one can approximate as $\cM(t_0)=\widetilde
\cM(t_0)+\cR_1(t_0)$. Here the leading term is
\[
\widetilde \cM(t_0) = \ep^r \int_{-\infty}^\infty
\dot x_0(t) \, q\left( \frac{x_0(t)}{\ep},\dot x_0(t);0 \right)
F\left(\frac{\omega (t+t_0)}{\ep}\right) dt\,,
\]
and the error is bounded as
\[
|\cR_1(t_0)| \leq \ep^r \|q(\cdot,\cdot,\ep)-q(\cdot,\cdot,0)\|_{C^0(K)}
\|F\|_{C^0(\TT^d)} \int_{-\infty}^\infty |\dot x_0(t)|dt \leq C \ep^{r+1}\,.
\]
The leading part of the Melnikov function is then written as
\[
\widetilde \cM(t_0)=\ep^r \sum_{m\in \cZ}\sum_{k=-\infty}^\infty F_m e^{i\frac{m\cdot \omega t_0}{\ep}}
M_{k,m}\,,
\]
where
\[
M_{k,m} :=
\int_{-\infty}^\infty \dot x_0(t) q_k(\dot x_0(t)) e^{i \frac{\vp_{k,m}(t)}{\ep}} dt\,.
\]
As~$m$ ranges over the finite set~$\cZ$ and the frequency vector~$\om$ is non-resonant, Lemma~\ref{L.cCk} can be readily extended to analyze the critical
points of the new phase function~$\vp_{k,m}$. An immediate consequence
of the proof, in fact, is that the sum over~$k$ can be restricted to
the set $|k|\geq |m \cdot \omega|/\max |\dot x_0|$. The details of the
argument are just as in the proof of Theorem~\ref{T.Melnikov}.

\section*{Acknowledgments}

A.E.\ and D.P.-S.\ are respectively supported by the ERC Starting Grants~633152 and~335079. A.L.\ is supported by the Knut och Alice Wallenbergs stiftelse KAW 2015.0365.
This work is supported in part by the ICMAT--Severo Ochoa grant
SEV-2015-0554.

% \appendix
% \section{Some stationary phase lemmas}\label{Appendix}

\bibliographystyle{amsplain}

\end{document}